\newcommand{\T}[1]{\textup{#1}}
\newcommand{\cstar}{\ensuremath{\mathrm C^\ast}}
\newcommand{\Star}{{}^\ast}
\newcounter{theorem}
\newtheorem{thm}[theorem]{Theorem}
\newtheorem{lemma}[theorem]{Lemma}
\newtheorem{prop}[theorem]{Proposition}
\newtheorem{cor}[theorem]{Corollary}
\newtheorem{defn}[theorem]{Definition}
\newtheorem{notation}[theorem]{Notation}
\newtheorem*{remark*}{Remark}
\newtheorem{remark}[theorem]{Remark}
\newtheorem{example}[theorem]{Example}
\numberwithin{equation}{section}
\numberwithin{theorem}{section}
\newcommand{\Q}{\mathbb{Q}}
\newcommand{\Z}{\mathbb{Z}}
\newcommand{\Tor}{\mathrm{Tor}_1^{\Z}}
\newcommand{\LTor}{\mathrm{LTor}_1^{\mathbb Z}}
\newcommand{\RTor}{\mathrm{RTor}_1^{\mathbb Z}}
\newcommand{\id}{\mathrm{id}}
\newcommand{\AT}[1]{}
\newcommand{\AS}[1]{}
\newcommand{\DE}[1]{}
\begin{document}

\title[Corrigendum: approximately inner flip]{Corrigendum to ``K-theoretic characterization of $\cstar$-algebras with approximately inner flip''}
\author[D.\ Enders]{Dominic Enders}
\author[A.\ Schemaitat]{Andr\'e Schemaitat}
\author[A.\ Tikuisis]{Aaron Tikuisis}
\thanks{AT supported by an NSERC discovery grant.}

\begin{abstract}
An error in the original paper is identified and corrected. The \cstar-algebras with approximately inner flip, which satisfy the UCT, are identified (and turn out to be fewer than what is claimed in the original paper). The action of the flip map on K-theory turns out to be more subtle, involving a minus sign in certain components. To this end, we introduce new geometric resolutions for \cstar-algebras, which do not involve index shifts in K-theory and thus allow for a more explicit description of the quotient map in the K\"unneth formula for tensor products.
\end{abstract}

\maketitle

\section{Introduction: the problem and corrections}

The errors in \cite{aifK} stem from a mistake in the proof of \cite[Lemma 4.1]{aifK}, where it is said ``The map $\alpha_{A,B}$ is explicitly described in \cite[Section 23.1]{Blackadar:KBook}, and it is apparent from this description that $\alpha_{B,A} \circ \sigma_{K_\ast(A),K_\ast(B)} = K_\ast(\sigma_{A,B}) \circ \alpha_{A,B}$, i.e., the first square in (4.2) commutes.''

In fact, the following example shows that 
\[ \alpha_{B,A} \circ \sigma_{K_*(A),K_*(B)} \neq K_*(\sigma_{A,B}) \circ \alpha_{A,B}, \]
when restricted to the component $K_1(A) \otimes K_1(B)$ (and thus with codomain $K_0(B \otimes A)$).

\begin{example}
\label{ex:bott-element}
Let $A=B \coloneqq C(\mathbb T)$ and let $u \in C(\mathbb T)$ be the canonical generator, so that $[u]_1$ is a generator of $K_1(C(\mathbb T)) \cong \mathbb Z$.
We identify $C(\mathbb T)\otimes C(\mathbb T)$ with $C(\mathbb T^2)$, and under this identification, the flip map $\sigma_{C(\mathbb T),C(\mathbb T)}$ corresponds to swapping coordinates:
\[ (\sigma_{C(\mathbb T),C(\mathbb T)}(f))(w,z) = f(z,w). \]
We have $K_0(C(\mathbb T^2)) \cong \mathbb Z^2$, generated by $[1_{C(\mathbb T^2)}]_0$ and the Bott element $b$.
Note that $K_0(\sigma_{C(\mathbb T),C(\mathbb T)})(b)= -b$\footnote{Since the Bott element in $K_0(C_0((0,1)^2))$ arises from a clutching construction, any reflection on $(0,1)^2$, and in particular the coordinate swap, i.e., the flip map, sends this Bott element to its inverse. Using the canonical embedding $C_0((0,1)^2) \to C(\mathbb T^2)$, it follows that the same is true for the flip map and Bott element for $C(\mathbb T^2)$.}
and $\alpha_{C(\mathbb T),C(\mathbb T)}([u]_1 \otimes [u]_1) = b$. Thus,
\begin{align*}
 \alpha_{C(\mathbb T),C(\mathbb T)}(\sigma_{K_*(C(\mathbb T)),K_*(C(\mathbb T))}([u]_1\otimes [u]_1))
& = 
 \alpha_{C(\mathbb T),C(\mathbb T)}([u]_1 \otimes [u]_1) \\
& = b, 
\end{align*}
whereas
\begin{align*}
	 K_*(\sigma_{C(\mathbb T),C(\mathbb T)})(\alpha_{C(\mathbb T),C(\mathbb T)}([u]_1 \otimes [u]_1))
 & = K_*(\sigma_{C(\mathbb T),C(\mathbb T)})(b) \\
& = -b \neq b.
\end{align*}
\end{example}

We shall show that the square on the left in the K\"unneth flip formula (\cite[Equation (4.2)]{aifK}) commutes \emph{exactly up to a minus sign} on $K_1(A)\otimes K_1(B)$ (and commutes without the minus sign on the other three components).
This has a knock-on effect that the square in the right in \cite[Equation (4.2)]{aifK} only commutes up to minus signs in appropriate components (full details in Lemma \ref{lem:KunnethFlipCorrected} below), and further implications in saying precisely which classifiable $\cstar$-algebras have approximately inner half-flip.
Here is an example demonstrating where the minus sign occurs for the torsion part of the K\"unneth flip formula.

\begin{example}
	Set $A = B \coloneqq \mathcal O_{n+1}$. By the K{\"u}nneth formula, we have that $\beta_{\mathcal O_n, \mathcal O_n} \colon K_1(\mathcal O_{n+1} \otimes \mathcal O_{n+1}) \to \Tor(\Z/n\Z, \Z/n\Z) \cong \Z/n\Z$ is an isomorphism and it is easy to check (for example by verifying the proof of \cite[Proposition 5.1]{aifK}) that $\eta_{\Z/n\Z,\Z/n\Z}$ is the identity. On the other hand by \cite[Proposition 3.5]{FHRT} the class of 
	$$
		u \coloneqq \sum_{i=1}^{n+1} s_i \otimes s_i^*
	$$
	generates $K_1(\mathcal O_{n+1} \otimes \mathcal O_{n+1})$, where $\{s_i\}_{i=1}^{n+1}$ is the canonical set of generators for $\mathcal O_{n+1}$. Since $\sigma_{\mathcal O_{n+1},\mathcal O_{n+1}}(u) = u^* = u^{-1}$ it follows that $K_1(\sigma_{\mathcal O_{n+1}, \mathcal O_{n+1}}) = -\id$ so that $(\ref{eq:KunnethFlipDiag2})$ indeed commutes in the  odd degree.
\end{example}

We introduce new techniques in the proof of the corrected version of commutation of the right square in \cite[Eq.\ (4.2)]{aifK}.
By making use of Kirchberg algebras, we are able to obtain free resolutions that do not involve index shifts; this makes the $\mathrm{Tor}$-related computations more conceptual and easier.

Another error was made in the listing of the $\cstar$-algebras with approximately inner flip, where it is incorrectly stated that the supernatural number $n$ in \cite[Theorem 2.2]{aifK} (and related results) must be of infinite type.
We take the opportunity to correct this error as well.

We list here all results which are incorrect in \cite{aifK} and their corrections.
In the next sections, we shall prove the corrected statements.
(Note that $\Q_1=\Z$.)

\begin{thm}[{Correction to \cite[Theorem 2.2]{aifK}}]
\label{thm:MainThmCorrected}
Let $A$ be a separable, unital $\cstar$-algebra with strict comparison, in the $\T{UCT}$ class, which is either infinite or quasidiagonal.
The following are equivalent.
\begin{enumerate}
\item $A$ has approximately inner flip; \stepcounter{enumi}
\item $A$ has asymptotically inner flip;
\item $A$ is simple, nuclear, has at most one trace and $K_0(A)\oplus K_1(A)$ (as a graded, unordered group) is isomorphic to one of $0\oplus \Q_m/\Z$ or $\Q_n \oplus \Q_m/\Z$, where $m$ and $n$ are supernatural numbers with $m$ of infinite type and such that $m$ divides $n$;
\item $A$ is Morita equivalent to one of:
\begin{enumerate}
\item $\mathbb C$;
\item $\mathcal E_{n,1,m}$;
\item $\mathcal E_{n,1,m} \otimes \mathcal O_\infty$;
\item $\mathcal F_{1,m}$,
\end{enumerate}
where in $\mathrm{(b)}$-$\mathrm{(d)}$, $m$ and $n$ are supernatural numbers with $m$ of infinite type and such that $m$ divides $n$.
\end{enumerate}
\end{thm}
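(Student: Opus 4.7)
The plan is to establish the cycle $(iii)\Rightarrow(i)\Rightarrow(iv)\Rightarrow(v)\Rightarrow(iii)$, retaining most of the scaffolding from the original proof of \cite[Theorem 2.2]{aifK} and substituting the corrected K\"unneth flip formula (Lemma \ref{lem:KunnethFlipCorrected}) wherever the erroneous version was previously used. The implication $(iii)\Rightarrow(i)$ is immediate from the definitions, so the real work lies in the other three directions.

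For $(i)\Rightarrow(iv)$, I would start from the observation that approximately inner flip forces $K_\ast(\sigma_{A,A})=\id$ on $K_\ast(A\otimes A)$. Feeding this equality into the corrected K\"unneth flip diagram, the new minus signs on the $K_1(A)\otimes K_1(A)$ summand and on the appropriate $\Tor$-summands become genuine equations of the form ``$z=-z$'', forcing the relevant images inside $K_\ast(A\otimes A)$ to be $2$-torsion. Combined with the standing hypotheses (strict comparison, UCT, and the infinite-or-quasidiagonal dichotomy), these vanishing statements restrict the admissible K-theoretic invariants to the two listed shapes, and force simplicity, nuclearity, and at most one trace by the now-standard classification machinery. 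The newly required condition that $m$ be of infinite type emerges here, as the precise constraint that must be imposed on $\Q_m/\Z$ so that the relevant Tor contributions in $K_\ast(A\otimes A)$ carrying the obstructing minus sign are compatible with $K_\ast(\sigma_{A,A})=\id$; this is where the correction strictly strengthens the conclusion of the original paper.

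For $(iv)\Rightarrow(v)$, once the K-theory, tracial data, simplicity and nuclearity have been pinned down, Kirchberg--Phillips classification handles the purely infinite case and the stably finite classification theorem (via quasidiagonality of traces) handles the remaining case, so that $A$ is determined up to stable isomorphism by its Elliott invariant. One then matches the computed K-groups of $\mathbb C$, $\mathcal E_{n,1,m}$, $\mathcal E_{n,1,m}\otimes \mathcal O_\infty$, and $\mathcal F_{1,m}$ to the two shapes in (iv) to read off the concrete Morita-equivalence representative. For $(v)\Rightarrow(iii)$, each of these model algebras admits an asymptotically inner flip by direct construction: $\mathbb C$ is trivial, $\mathcal F_{1,m}$ follows from its AF/UHF-type inductive structure, and the algebras $\mathcal E_{n,1,m}$ together with their $\mathcal O_\infty$-stabilizations admit explicit paths of flip-implementing unitaries built from their canonical generators.

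The main obstacle is the implication $(i)\Rightarrow(iv)$, and more specifically extracting from the corrected K\"unneth flip diagram the precise group-theoretic vanishing statements that force both the listed shape of $K_0\oplus K_1$ \emph{and} the infinite-type requirement on $m$. This is exactly the step where the error in \cite{aifK} occurred, and where the correct book-keeping of minus signs on the $K_1\otimes K_1$ and $\Tor$ summands has to be done carefully. The remaining implications are essentially unchanged from \cite{aifK}, up to replacing the old supernatural-number hypothesis by the correct one throughout.
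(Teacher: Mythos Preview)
Your overall cycle $(iii)\Rightarrow(i)\Rightarrow(iv)\Rightarrow(v)\Rightarrow(iii)$ matches the paper's, and you correctly identify $(iii)\Rightarrow(i)$ as immediate and $(iv)\Rightarrow(v)$ as a classification step.

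There are, however, two genuine gaps. First, for $(v)\Rightarrow(iii)$ the paper does \emph{not} proceed by building explicit asymptotic paths of flip-implementing unitaries from canonical generators of $\mathcal E_{n,1,m}$ and $\mathcal F_{1,m}$; no such direct construction is offered, and it is not clear one is available. Instead the paper uses Theorem~\ref{thm:KKsuff} to show that $\sigma_{A,A}$ and $\id_{A\otimes A}$ agree in $KK$ (this is where the corrected signs enter on the sufficiency side: on $K_0$ one uses the $(i,j)=(0,0)$ square with no sign, and on $K_1$ one uses the $\Tor(K_1,K_1)$ square where $(-1)^{1+1\cdot 1}=+1$), and then invokes classification to upgrade $KK$-agreement to asymptotic unitary equivalence (Corollary~\ref{cor:Suff}).

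Second, your account of $(i)\Rightarrow(iv)$ has the right flavour but mislocates the content. The equation ``$z=-z$'' is not used to conclude that things are $2$-torsion; rather, the sign $(-1)^{1\cdot 1}$ on $K_1(A)\otimes K_1(A)\to K_0(A\otimes A)$ forces the torsion-free part of $K_1(A)$ to vanish (since $-\id\neq\id$ on a nonzero torsion-free group), while the sign $(-1)^{1+0\cdot 0}$ on $\Tor(K_0(A),K_0(A))\to K_1(A\otimes A)$ forces the torsion part of $K_0(A)$ to vanish. This is what pins the torsion to $K_1$ and the torsion-free part to $K_0$, and is the real payload of the correction (see the proof of Theorem~\ref{thm:NecessaryCorrected}). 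You also misidentify the change to the supernatural hypotheses: the infinite-type condition on $m$ was already present in \cite{aifK} and does not come from the sign correction; the corrigendum's change here is that $n$ is \emph{no longer} required to be of infinite type. Finally, simplicity, nuclearity and uniqueness of trace come from Effros--Rosenberg \cite{EffrosRosenberg}, not from classification.
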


In \cite[Theorem 2.2]{aifK}, it was claimed that the above conditions are also equivalent to (ii) $A \otimes A$ has approximately inner flip.
However, it now transpires that this is not the case, as for example $\mathcal O^\infty$ does not have approximately inner flip, but $\mathcal O^\infty \otimes \mathcal O^\infty$ (which is Morita equivalent to $\mathcal O_\infty$) does have approximately inner flip.

The following is the correct version of the fundamental lemma from \cite{aifK} which relates the K\"unneth formula to the flip map.
To give a statement that takes the index-related behaviour into account, we use $\beta_{A,B}\colon K_{i+j+1}(A\otimes B) \to \Tor(K_i(A),K_j(B))$ to denote the K\"unneth formula map
\[ K_{i+j+1}(A\otimes B) \to \Tor(K_i(A),K_j(B))\oplus \Tor(K_{1-i}(A),K_{1-j}(B)), \]
composed with the coordinate projection onto $\Tor(K_i(A),K_j(B))$.

\begin{lemma}[{Correction to \cite[Lemma 4.1]{aifK}}]
\label{lem:KunnethFlipCorrected}
Let $A,B$ be separable $\cstar$-algebras in the $\textup{UCT}$ class.
For $i,j \in \Z /2 \Z$, the following diagrams commute:
\begin{equation}
\label{eq:KunnethFlipDiag1}
\begin{gathered}
\xymatrix{
K_i(A) \otimes K_j(B) \ar[r]^-{\alpha_{A,B}}\ar[d]_-{(-1)^{ij}\sigma_{K_i(A),K_j(B)}} & K_{i+j}(A\otimes B) \ar[d]^-{K_{i+j}(\sigma_{A,B})} \\
K_j(B) \otimes K_i(A) \ar[r]^-{\alpha_{B,A}} & K_{i+j}(B\otimes A) 
}
\end{gathered}
\end{equation}
and
\begin{equation}
\label{eq:KunnethFlipDiag2}
\begin{gathered}
\xymatrix{
K_{i+j+1}(A\otimes B) \ar[r]^-{\beta_{A,B}}\ar[d]_-{K_{i+j+1}(\sigma_{A,B})} & \Tor(K_i(A), K_j(B)) \ar[d]^-{(-1)^{1+ij}\eta_{K_i(A),K_j(B)}}\\
K_{i+j+1}(B\otimes A) \ar[r]^-{\beta_{B,A}} & \Tor(K_j(B), K_i(A)).
} 
\end{gathered}
\end{equation}
The map $\beta_{A,B}$ is as in Definition \ref{def:beta} and fits into the K{\"u}nneth formula.
\end{lemma}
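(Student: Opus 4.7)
The plan is to prove the two diagrams separately, in each case by case analysis on the four bidegrees $(i,j) \in \{0,1\}^2$. Since $\alpha$ and $\beta$ are natural in both variables and compatible with suspension (up to standard signs), it suffices to verify the diagrams on these four basic bidegrees.

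For \eqref{eq:KunnethFlipDiag1}, I would use the explicit description of $\alpha_{A,B}$ as the external K-theory product from Blackadar's K-book. On the three components with $(-1)^{ij} = +1$, the low-degree external product is manifestly symmetric under the flip, and commutativity can be read directly from the formulas. For the $(1,1)$ component, naturality lets me reduce to the universal situation: any simple tensor $x \otimes y \in K_1(A) \otimes K_1(B)$ is the image of $[u]_1 \otimes [u]_1$ under a pair of $*$-homomorphisms $C(\mathbb T) \to A$ and $C(\mathbb T) \to B$, and the minus sign then follows from Example \ref{ex:bott-element}.

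For \eqref{eq:KunnethFlipDiag2}, I plan to use the new geometric resolutions introduced later in the paper: one presents $A$ as the quotient in a short exact sequence $0 \to I \to E \to A \to 0$ with $E$ a Kirchberg algebra and $K_*(I) \to K_*(E)$ a free resolution of $K_*(A)$ without an index shift. Applying $- \otimes B$ and passing to K-theory yields a six-term exact sequence whose boundary map, composed with the inverse of $\alpha_{I,B}$ (an isomorphism because $K_*(I)$ is free), is $\beta_{A,B}$. The flip induces a morphism from this six-term sequence to the analogous one for $B \otimes A$, so naturality of the boundary map converts commutativity of \eqref{eq:KunnethFlipDiag2} into a statement comparing $\alpha_{I,B}$ and $\alpha_{B,I}$ under the flip---which is precisely \eqref{eq:KunnethFlipDiag1} applied to $(I, B)$.

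The main obstacle I anticipate is sign bookkeeping through the boundary map. The change from $(-1)^{ij}$ in \eqref{eq:KunnethFlipDiag1} to $(-1)^{1+ij}$ in \eqref{eq:KunnethFlipDiag2} should emerge from the degree-shifting boundary map together with the standard sign conventions for the index map, but sorting this out cleanly in each bidegree will require care---in particular, the fact that the resolution has no index shift is what makes the comparison with \eqref{eq:KunnethFlipDiag1} applied to $(I,B)$ direct rather than twisted by an extra suspension. The odd-odd computation in the $\mathcal O_{n+1}$ example above provides a useful sanity check for the $(1,1)$ component.
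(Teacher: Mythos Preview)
Your plan for diagram \eqref{eq:KunnethFlipDiag1} is essentially the paper's: explicit formulas for the three ``easy'' bidegrees, and naturality plus Example~\ref{ex:bott-element} for $(1,1)$.

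For diagram \eqref{eq:KunnethFlipDiag2}, however, there is a genuine gap. You propose to flip the six-term sequence coming from $0\to I\otimes B\to E\otimes B\to A\otimes B\to 0$ and call the result ``the analogous one for $B\otimes A$''. But the flipped sequence is $0\to B\otimes I\to B\otimes E\to B\otimes A\to 0$, whose boundary (after $\alpha_{B,I}^{-1}$) computes the \emph{right} Tor-functor $\RTor(K_*(B),K_*(A))$ via the resolution of $A$ in the second slot; this is $\beta^R_{B,A}$ in the sense of Remark~\ref{rem:right}. By contrast, $\beta_{B,A}$ in the statement is defined from a $\partial$-free resolution $0\to J\to F\to B\to 0$ of $B$, giving the sequence $0\to J\otimes A\to F\otimes A\to B\otimes A\to 0$. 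Your naturality argument therefore compares $\beta_{A,B}$ with $\beta^R_{B,A}$, not with $\beta_{B,A}$, and the passage between the two is exactly where the extra $-1$ lives.

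The paper closes this gap by taking resolutions of \emph{both} $A$ and $B$ and proving Lemma~\ref{lem:alg}: the diagram-chase map $\theta_B$ (which encodes the algebraic flip $\eta$ through the double complex) equals $-\partial_B$. This minus sign, combined with \eqref{eq:KunnethFlipDiag1} applied to $(A,J)$ (not $(I,B)$), yields $(-1)^{1+ij}$. So the extra sign does not come from ``standard sign conventions for the index map'' as you suggest; it comes from the comparison of the two boundary maps ${}_A\partial$ and $\partial_B$ via the double resolution, and Lemma~\ref{lem:alg} is the missing ingredient in your outline.
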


\begin{thm}[{Correction to \cite[Theorem 5.2]{aifK}}]
\label{thm:KKsuff}
Let $A$ be a separable $\cstar$-algebra in the $\textup{UCT}$ class.
Suppose that $K_0(A)\oplus K_1(A)$ is one of the following graded groups:
\begin{enumerate}
\item $0 \oplus \Q_m/\Z$,where $m$ is a supernatural number of infinite type; or
\item $\Q_n \oplus \Q_m/\Z$, where $m$ and $n$ are supernatural numbers with $m$ of infinite type and such that $m$ divides $n$.
\end{enumerate}
Then the flip map $\sigma_{A,A}:A \otimes A \to A \otimes  A$ has the same KK-class as the identity map.
\end{thm}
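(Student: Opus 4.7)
The plan is to use the Universal Coefficient Theorem for $KK(A\otimes A, A\otimes A)$: since $A$ (and thus $A \otimes A$) is in the UCT class, it suffices to verify that $\sigma_{A,A}$ and $\id_{A\otimes A}$ induce the same map on $K_*$ and that the UCT $\Ext$-term vanishes. Both facts will follow from the K\"unneth formula combined with the strong divisibility forced by the hypothesis that $m$ is of infinite type with $m \mid n$.

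First I would compute $K_*(A\otimes A)$. Since $m \mid n$ and $m$ is of infinite type, every prime of $m$ has infinite multiplicity in $n$, so $\Q_n$ is $\Q_m$-local and $\Q_n \otimes \Q_m = \Q_n$. Tensoring the sequence $0 \to \Z \to \Q_m \to \Q_m/\Z \to 0$ by $\Q_n$ then gives $\Q_n \otimes (\Q_m/\Z) = 0$, and the analogous calculation gives $(\Q_m/\Z)\otimes (\Q_m/\Z) = 0$. Hence all K\"unneth terms mixing $K_0$ and $K_1$, or involving $K_1 \otimes K_1$, vanish, leaving $K_0(A\otimes A) \cong \Q_n \otimes \Q_n$ (which is zero in case (i)) and $K_1(A\otimes A) \cong \Tor(\Q_m/\Z, \Q_m/\Z) \cong \Q_m/\Z$. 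Now I apply Lemma \ref{lem:KunnethFlipCorrected}: in diagram (\ref{eq:KunnethFlipDiag1}) with $i=j=0$ the sign is $+1$ and $K_0(\sigma_{A,A})$ reduces to the tensor swap on $\Q_n \otimes \Q_n$, which is the identity because $\Q_n$ is rank-one torsion-free (for any such $G$, commensurability of $g,h\in G$ gives a nonzero integer $k$ with $k(g\otimes h - h\otimes g)=0$, and $G\otimes G$ is torsion-free, embedding into $\Q\otimes_\Z\Q=\Q$). In diagram (\ref{eq:KunnethFlipDiag2}) with $i=j=1$ the sign is $(-1)^{1+1}=+1$ and $K_1(\sigma_{A,A})$ reduces to $\eta_{\Q_m/\Z,\Q_m/\Z}$; by naturality of $\eta$ along the direct system $\Q_m/\Z = \varinjlim_{d\mid m}\Z/d\Z$, this further reduces to $\eta_{\Z/d\Z,\Z/d\Z}=\id$, the cyclic case matching the $\mathcal O_{n+1}$ example above (with the appropriate parity of $i+j$). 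Thus $K_*(\sigma_{A,A}) = K_*(\id)$.

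Next I would check vanishing of the UCT $\Ext$-terms. Since $m$ is of infinite type, $\Q_m/\Z$ is divisible, hence injective as a $\Z$-module, so $\Ext(K_0(A\otimes A), \Q_m/\Z) = 0$. For the other direction, $m$-divisibility of $\Q_n$ (from $m\mid n$, $m$ of infinite type) is preserved under tensor product, so $\Q_n\otimes\Q_n$ is also $m$-divisible. Presenting $\Q_m = \varinjlim_{d\mid m}\tfrac{1}{d}\Z$, the $\varprojlim^1$-description of $\Ext(\Q_m, \Q_n\otimes\Q_n)$ has surjective transition maps (by $m$-divisibility), so Mittag-Leffler gives $\Ext(\Q_m,\Q_n\otimes\Q_n)=0$; the long exact sequence from $0\to\Z\to\Q_m\to\Q_m/\Z\to 0$ then yields $\Ext(\Q_m/\Z,\Q_n\otimes\Q_n)=0$. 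With both $\Ext$-terms trivial, UCT degenerates to $KK(A\otimes A, A\otimes A) \cong \mathrm{Hom}(K_*(A\otimes A), K_*(A\otimes A))$, so $K_*(\sigma_{A,A}) = K_*(\id)$ upgrades to $[\sigma_{A,A}] = [\id]$.

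I expect the main difficulty to lie in the identification $\eta_{\Q_m/\Z,\Q_m/\Z}=\id$ together with the sign bookkeeping in Lemma \ref{lem:KunnethFlipCorrected}; once these are in hand, the remaining homological-algebra steps are direct consequences of the divisibility properties forced by the hypotheses on $m$ and $n$.
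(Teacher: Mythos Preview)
Your proposal is correct and follows essentially the same approach as the paper: compute $K_*(A\otimes A)$ via K\"unneth, use Lemma~\ref{lem:KunnethFlipCorrected} with the signs $(-1)^{0\cdot 0}=+1$ and $(-1)^{1+1\cdot 1}=+1$ to reduce $K_*(\sigma_{A,A})=\id$ to $\sigma_{\Q_n,\Q_n}=\id$ and $\eta_{\Q_m/\Z,\Q_m/\Z}=\id$, and then invoke the UCT after checking the $\Ext$-terms vanish. The only difference is that where the paper cites \cite[Proposition~5.1]{aifK} and \cite[Lemma~1.1]{aifK} for the identifications $\sigma=\id$, $\eta=\id$ and the $\Ext$-vanishing, you supply direct arguments (rank-one torsion-free for $\sigma$, a direct-limit reduction to cyclic groups for $\eta$, and a Mittag--Leffler computation for $\Ext$); these are exactly the content of those cited results.
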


\begin{cor}[{Correction to \cite[Corollary 5.3]{aifK}}]
\label{cor:Suff}
Let $m$ and $n$ be supernatural numbers such that $m$ has infinite type and $m$ divides $n$.
Then $\mathcal E_{n,1,m}$ and $\mathcal F_{1,m}$ have asymptotically inner flip.
\end{cor}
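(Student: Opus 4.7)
The plan is to combine the KK-theoretic calculation of Theorem~\ref{thm:KKsuff} with the classification of $\ast$-homomorphisms (and automorphisms) up to asymptotic unitary equivalence. The structure is exactly as in the original proof of \cite[Corollary~5.3]{aifK}; the only new input is that the corrected Theorem~\ref{thm:KKsuff} applies in a slightly narrower range of cases, but it still applies to both $\mathcal E_{n,1,m}$ and $\mathcal F_{1,m}$.

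First, I would record that the K-theory of each algebra in the statement matches the hypotheses of Theorem~\ref{thm:KKsuff}: one has $K_0(\mathcal E_{n,1,m})\oplus K_1(\mathcal E_{n,1,m}) \cong \Q_n \oplus \Q_m/\Z$ and $K_0(\mathcal F_{1,m})\oplus K_1(\mathcal F_{1,m}) \cong 0 \oplus \Q_m/\Z$, and in both cases $m$ is of infinite type with $m \mid n$. Theorem~\ref{thm:KKsuff} then yields $[\sigma_{A,A}] = [\id_{A\otimes A}]$ in $KK(A\otimes A, A\otimes A)$ for $A$ equal to either algebra.

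Second, I would identify $A\otimes A$ as a classifiable $\cstar$-algebra and invoke classification to lift this KK-equivalence to an asymptotic unitary equivalence. For $A=\mathcal F_{1,m}$, the algebra $A\otimes A$ is a (unital) Kirchberg algebra in the UCT class, so Phillips' classification gives that every automorphism of $A\otimes A$ which is trivial in $KK$ is asymptotically unitarily equivalent to $\id_{A\otimes A}$; applied to $\sigma_{A,A}$ this is the desired conclusion. For $A=\mathcal E_{n,1,m}$, the tensor square is a simple, separable, nuclear, stably finite, $\mathcal Z$-stable, UCT $\cstar$-algebra with a unique tracial state; by the uniqueness theorem for endomorphisms of classifiable algebras (stemming from the Elliott--Gong--Lin--Niu and Tikuisis--White--Winter classification program), any unital endomorphism of $A\otimes A$ which is $KK$-trivial and preserves the unique trace is asymptotically unitarily equivalent to the identity. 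Since the flip automatically preserves $\tau\otimes\tau$ and is $KK$-trivial by the previous step, this again gives that $\sigma_{A,A}$ is asymptotically inner.

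The main obstacle is purely bookkeeping: one must verify that $A\otimes A$ genuinely sits inside the appropriate classifiable class and that the classification theorem one cites delivers \emph{asymptotic} (not merely approximate) unitary equivalence. In the Kirchberg case this reduces to checking that $\mathcal F_{1,m}\otimes \mathcal F_{1,m}$ is again a Kirchberg algebra, which follows from $\mathcal O_\infty$-absorption and preservation of UCT under nuclear tensor products. In the stably finite case, $\mathcal Z$-stability, simplicity, nuclearity, and UCT are all preserved under tensor products of classifiable algebras, and uniqueness of trace for $\mathcal E_{n,1,m}\otimes \mathcal E_{n,1,m}$ follows from uniqueness of trace on $\mathcal E_{n,1,m}$. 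Once these structural facts are in place, the corollary follows by direct citation of the asymptotic-uniqueness theorem in each setting.
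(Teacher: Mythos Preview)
Your proposal is correct and follows essentially the same approach as the paper: the paper's proof simply states that the argument from \cite[Corollary~5.3]{aifK} goes through unchanged once one substitutes the corrected Theorem~\ref{thm:KKsuff} for \cite[Theorem~5.2]{aifK}, and you have spelled out exactly that argument (KK-triviality of the flip from Theorem~\ref{thm:KKsuff}, followed by the asymptotic-uniqueness theorems from classification in the Kirchberg and stably finite cases).
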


The next result is a modification of \cite[Theorem 6.1]{aifK}, giving correct restrictions on the K-theory of a classifiable $\cstar$-algebra with approximately inner flip (matching those in Theorem \ref{thm:MainThmCorrected}).

\begin{thm}
\label{thm:NecessaryCorrected}
Let $A$ be a separable $\cstar$-algebra in the $\T{UCT}$ class  such that $A$ has approximately inner flip.
Then $K_0(A)\oplus K_1(A)$ is isomorphic to one of the following graded groups:
\begin{enumerate}
\item $0 \oplus \Q_m/\Z$, where $m$ is a supernatural number of infinite type; or
\item $\Q_n \oplus \Q_m/\Z$, where $m$ and $n$ are supernatural numbers with $m$ of infinite type and such that $m$ divides $n$.
\end{enumerate}
\end{thm}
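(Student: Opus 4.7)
The strategy is to reuse the framework of the original proof of \cite[Theorem 6.1]{aifK}, substituting the corrected K\"unneth flip formula (Lemma \ref{lem:KunnethFlipCorrected}) in place of the erroneous earlier version; the added sign $(-1)^{ij}$ in diagram (\ref{eq:KunnethFlipDiag1}) and its $\Tor$ analog in (\ref{eq:KunnethFlipDiag2}) produce new constraints on $K_*(A)$ that sharpen the original paper's conclusion to match the present statement. The starting point is that $\sigma_{A,A}$ being approximately inner implies $K_*(\sigma_{A,A}) = \id$. Plugging this into (\ref{eq:KunnethFlipDiag1}) with $(i,j) = (1,1)$ and using the injectivity of $\alpha_{A,A}$ as part of the K\"unneth sequence yields the new antisymmetry relation
\[ x \otimes y + y \otimes x = 0 \quad \text{in } K_1(A) \otimes K_1(A) \text{ for all } x,y, \]
while the $(i,j) = (0,0)$ case gives the symmetric identity $x \otimes y = y \otimes x$ in $K_0(A) \otimes K_0(A)$. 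Combining (\ref{eq:KunnethFlipDiag2}) with the surjectivity of $\beta_{A,A}$ onto the direct sum of $\Tor$ summands forces $\Tor(K_0(A), K_1(A)) = 0$, together with sign conditions on the swap involutions of $\Tor(K_i(A), K_i(A))$ for $i=0,1$.

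The heart of the proof is then a purely abelian-group analysis of these constraints. Specializing the antisymmetry to $y = x$ yields $2(x \otimes x) = 0$; since the tensor square of a torsion-free abelian group is itself torsion-free with $x \otimes x \neq 0$ for $x \neq 0$, this forces $K_1(A)$ to be torsion. The $\Tor$-swap constraint then rules out multiplicities $\geq 2$ in any Pr\"ufer component of $K_1(A)$, since otherwise the swap on the off-diagonal summands of $\Tor(K_1(A), K_1(A))$ would fail to equal $\id$. The self-absorbing behaviour of $A$ (approximately inner flip forces $K_*(A \otimes A) \cong K_*(A)$) combined with the absence of torsion inside a torsion-free $K_0(A)$ kills $K_1(A) \otimes K_1(A)$, forcing $K_1(A)$ to be divisible; together these identify $K_1(A) \cong \Q_m/\Z$ with $m$ of infinite type. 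The analysis of $K_0(A)$ proceeds as in the original paper: the symmetric tensor identity restricts the torsion-free rank to at most one, while $\Tor(K_0(A), K_1(A)) = 0$ together with the self-absorbing computation of $K_0(A \otimes A)$ eliminates residual torsion in $K_0(A)$ and pins down $K_0(A) = 0$ or $\Q_n$ along with the divisibility compatibility $m \mid n$.

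The main technical obstacle is the fine-grained abelian-group case analysis for $K_1(A)$: the antisymmetric tensor constraint alone admits many divisible torsion groups, and it is only the combined use of this constraint with the $\Tor$-swap sign from (\ref{eq:KunnethFlipDiag2}) --- itself arising from the Koszul sign convention in the K\"unneth formula --- that pins down the multiplicity-one form $\Q_m/\Z$. Accurate sign-tracking throughout, previously missing in \cite{aifK}, is precisely what Lemma \ref{lem:KunnethFlipCorrected} enables.
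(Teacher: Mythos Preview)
Your overall plan---deduce sign constraints from Lemma~\ref{lem:KunnethFlipCorrected} and then run a group-theoretic analysis---is right, and several steps (the antisymmetry on $K_1(A)\otimes K_1(A)$ forcing $K_1(A)$ to be torsion, the symmetry on $K_0(A)\otimes K_0(A)$ bounding the torsion-free rank, the vanishing of $\Tor(K_0(A),K_1(A))$) match the paper's argument. But there is a genuine gap: the assertion that ``approximately inner flip forces $K_*(A\otimes A)\cong K_*(A)$'' is false. Approximately inner flip is much weaker than self-absorption; for instance $A=\bigotimes_{p\ \text{prime}} M_p$ has approximately inner flip, yet $K_0(A)=\Q_n$ with $n=\prod_p p$ while $K_0(A\otimes A)=\Q_{n^2}\neq \Q_n$ (this is exactly the example the paper uses to invalidate \cite[Corollary~7.4]{aifK}). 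You invoke this false isomorphism at two crucial points: to kill $K_1(A)\otimes K_1(A)$ (hence to get divisibility of $K_1(A)$), and to eliminate torsion in $K_0(A)$. Both deductions collapse without it, and with them the identification $K_1(A)\cong\Q_m/\Z$.

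The paper fills these gaps by different means. For the structure of the torsion parts $T_{G_i}$ it tensors $A$ with the model algebra $\mathcal F_{1,p^\infty}$ (which itself has approximately inner flip, so $A\otimes\mathcal F_{1,p^\infty}$ does too) and applies Lemma~\ref{lem:NoCycSummand} to the resulting $p$-group $K_i(A\otimes\mathcal F_{1,p^\infty})$; the K\"unneth sequence then exhibits each $p$-primary part of $T_{G_i}$ as a quotient of $\Q_{p^\infty}/\Z$, forcing it to be $0$ or $\Q_{p^\infty}/\Z$. To eliminate $T_{G_0}$ it uses the sign in \eqref{eq:KunnethFlipDiag2} for $(i,j)=(0,0)$: on the summand $\Tor(T_{G_0},T_{G_0})\subseteq K_1(A\otimes A)$ one gets $K_1(\sigma_{A,A})=-\eta$, and since $\eta=\id$ there by \cite[Proposition~5.1]{aifK}, this contradicts $K_1(\sigma_{A,A})=\id$ unless $T_{G_0}=0$. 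Your outline could be repaired by importing these two devices in place of the self-absorption claim.
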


Although the proofs of \cite[Lemmas 6.2-6.4]{aifK} are problematic as they use \cite[Lemma 4.1]{aifK}, their statements are still correct, as we will prove.

The result \cite[Corollary 7.4]{aifK} is incorrect, as for example the $\cstar$-algebra $A \coloneqq \bigotimes_{p \text{ prime}} M_p$ has approximately inner flip but $A \otimes A$ is not self-absorbing.

\subsection*{Pull-backs}
In addition to the notation and preliminaries found in \cite{aifK}, we will make use of pull-backs in the category of $\cstar$-algebras; we remind the reader of their explicit realization here.
Given $\cstar$-algebras $A,B,C$ and $\Star$-homomorphisms $\phi \colon A \to C, \psi \colon B \to C$, the associated \emph{pull-back} is the $\cstar$-algebra
\[ D \coloneqq \{(a,b) \in A \oplus B : \phi(a)=\psi(b)\}, \]
and the associated pull-back diagram is the commuting diagram
\[ \xymatrix{
D \ar[r]\ar[d] & A \ar[d]^{\phi} \\ B\ar[r]_{\psi} & C, } \]
where the maps $D \to A$ and $D \to B$ are the restrictions of the coordinate projections on $A\oplus B$.\\

Throughout the paper, $\otimes$ denotes the minimal tensor product.

\section{Geometric resolutions for $\cstar$-algebras revisited}
\label{sec:geores}

In order to prove the K{\"u}nneth formula for tensor products, one needs a way to realize projective resolutions of the K-groups of a $\cstar$-algebra by maps on the level of $\cstar$-algebras. These so-called {\it geometric resolutions} were introduced by Rosenberg and Schochet in \cite{RS}. Their construction can be summarized as follows:\\

\noindent {\itshape Given any separable $\cstar$-algebra $A$, there exists an extension 
$$ 
0\to S(S^2A\otimes{\mathbb{K}}) \to E\to B\to 0
$$
of separable $\cstar$-algebras such that the associated K-theory six-term exact sequence degenerates into
\[
	\xymatrix{
	0 \ar[r] & K_*(E) \ar[r] & K_*(B) \ar[r]^-\partial & K_*(S(S^2A\otimes\mathbb{K})) \ar[r]& 0
	}
\]
and constitutes a projective resolution of $K_*(S(S^2A\otimes \mathbb{K}))\cong K_{*+1}(A)$.} 

\ \\
There is, however, one minor drawback to their approach. The projective resolutions obtained this way always involve a degree shift and a boundary map $\partial$. This can cause problems in situations where the K-groups carry additional structure, which is preserved by K-theory maps induced by $\Star$-homomorphisms, but not necessarily by the map $\partial$. This is, for example, the case in the study of K-theory for tensor products, where one needs to keep track of decompositions like $K_{i}(C\otimes D)=\bigoplus_{j\in\Z/2\Z} K_j(C)\otimes K_{i+j}(D)$ (for suitable $\cstar$-algebras $C,D$).

In this section we introduce new geometric resolutions, which do not involve these index shifts. This makes the K-theory computations for $\cstar$-tensor products more conceptual and allows for a more explicit description of the quotient map in the K\"unneth formula, see Proposition \ref{prop:kunnethformula}.
\par Another subtlety is that we ask our geometric resolutions to be semisplit, which is automatic for those  used by Rosenberg and Schochet as they use mapping cone sequences. The feature of being semisplit is crucial for preserving exactness when tensoring with a fixed \cstar-algebra with respect to the minimal tensor product, cf. Remark \ref{rem:semisplit}, but is further used to get a two-out-of-three property for the UCT, which is automatic for nuclear $\cstar$-algebras. Let us first prove this well-known result.

\begin{lemma}
	\label{lem:semisplit}
	Assume
	\[
		\xymatrix{
			0 \ar[r] & I \ar[r]^{\iota} & E \ar[r]^{\pi} & A \ar[r] & 0
		}
	\]
	is a semisplit extension of separable $\cstar$-algebras, i.e. $\pi$ admits a completely positive contractive (c.p.c.) split. Then, if two of the three algebras in the short exact sequence satisfy the $\T{UCT}$, so does the third.
\end{lemma}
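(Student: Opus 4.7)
The plan is to reduce the lemma to a two-out-of-three property of $KK$-theory via the standard $KK$-theoretic characterization of the UCT class: a separable $\cstar$-algebra $D$ satisfies the UCT if and only if $KK_*(D, B) = 0$ for every separable $\cstar$-algebra $B$ with $K_*(B) = 0$ (Rosenberg--Schochet). This turns a statement about the UCT into a statement about exactness of certain $KK$-groups, which the extension itself will provide.

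The key input I would use is that a semisplit extension
\[ 0 \to I \xrightarrow{\iota} E \xrightarrow{\pi} A \to 0 \]
of separable $\cstar$-algebras induces, for every separable $\cstar$-algebra $B$, a six-term exact sequence
\[ \cdots \to KK_{*-1}(I, B) \to KK_*(A, B) \xrightarrow{\pi^*} KK_*(E, B) \xrightarrow{\iota^*} KK_*(I, B) \to KK_{*+1}(A, B) \to \cdots. \]
This is obtained by identifying $I$, up to $KK$-equivalence, with the mapping cone of $\pi$, and then applying the Puppe sequence; the existence of a c.p.c.\ splitting for $\pi$ is precisely what guarantees this identification and, equivalently, what ensures that the minimal tensor product with $B$ preserves exactness of the given sequence (cf.\ Remark \ref{rem:semisplit}).

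To conclude, I would fix an arbitrary separable $\cstar$-algebra $B$ with $K_*(B) = 0$ and assume that two of $I, E, A$ satisfy the UCT. By the characterization above, two of the three groups $KK_*(I, B)$, $KK_*(E, B)$, $KK_*(A, B)$ vanish, and exactness of the six-term sequence forces the third to vanish as well. Since $B$ was an arbitrary $K$-contractible separable $\cstar$-algebra, applying the characterization in the reverse direction shows that the remaining algebra in the extension also satisfies the UCT.

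The main point requiring care—rather than a genuine obstacle—is setting up the six-term $KK$-exact sequence, which is exactly where the semisplit hypothesis is consumed: without a c.p.c.\ splitting, minimal tensoring with $B$ may destroy exactness and the Puppe-style argument identifying $I$ with the mapping cone of $\pi$ in $KK$ breaks down. Once these two classical black boxes are in place, the remaining argument is purely formal.
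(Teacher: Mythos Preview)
Your proposal is correct and follows essentially the same argument as the paper: both use the six-term exact sequence in $KK$ in the first variable associated to a semisplit extension (the paper cites \cite[Theorem~19.5.7]{Blackadar:KBook}), combined with the Rosenberg--Schochet characterization of the UCT class via vanishing of $KK_*(-,D)$ against $K$-contractible $D$ (the paper cites \cite[23.10.5]{Blackadar:KBook}). The only cosmetic difference is that the paper tests against $\sigma$-unital $D$ while you test against separable $B$, and your aside about tensoring with $B$ preserving exactness is not actually needed here---that use of the semisplit hypothesis appears elsewhere in the paper.
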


\begin{proof}
	 Let $D$ be a $\sigma$-unital $\cstar$-algebra with $K_*(D) = 0$. Using \cite[Theorem 19.5.7]{Blackadar:KBook} and the fact that our extension is semisplit, there exists a six-term exact sequence
	 \[
	 	\xymatrix{
	 		KK(A,D) \ar[r] & KK(E,D) \ar[r] & KK(I,D) \ar[d] \\
	 		KK^1(I,D) \ar[u] &  KK^1(E,D) \ar[l] & KK^1(A,D) \ar[l]
	 	}
	 \]
	 If for example $A$ and $E$ satisfy the UCT, the above diagram simplifies to 
	  \[
	 	\xymatrix{
	 		0 \ar[r] & 0 \ar[r] & KK(I,D) \ar[d] \\
	 		KK^1(I,D) \ar[u] &  0 \ar[l] & 0 \ar[l]
	 	}
	 \]
	 It follows that $KK^*(I,D) = 0$ and, since $D$ was arbitrary, \cite[23.10.5 (iv)$\Rightarrow$(i)]{Blackadar:KBook} implies that $I$ satisfies the UCT, too. The same argument applies if we assume any other two $\cstar$-algebras in the extension to satisfy the UCT. 
\end{proof}

\begin{defn}
\label{def:geores}
	Let $A$ be a  $\cstar$-algebra. A semisplit short exact sequence 
	\[
		0 \to I \to E \to A \to 0
	\]
	is called a {\bf $\partial$-free geometric resolution} of $A$, if the associated K-theory six-term exact sequence degenerates into
	\[
		0 \to K_*(I) \to K_*(E) \to K_*(A) \to 0
	\]
	and constitutes a projective resolution for $K_*(A)$.
\end{defn}

\begin{lemma}
\label{lem:trick}
Let $\varphi\colon A\to B$ be a $\Star$-homomorphism. Then, there exists a $\cstar$-algebra $\hat{A}$ containing $A$ such that
\begin{enumerate}
	\item the inclusion $A\subseteq \hat{A}$ is a homotopy equivalence, 
	\item the map $\varphi$ extends to a surjective $\Star$-homomorphism $\hat{\varphi}$ which, in addition, admits a completely positive splitting $s$:
		\[
		\begin{xy}
		\xymatrix{
			A \ar[r]^\varphi \ar[d]_\subseteq & B \ar@/^1pc/[dl]^s \\
			\hat{A} \ar@{->>}[ur]^{\hat{\varphi}}
		}
		\end{xy}
		\]
\end{enumerate}
\end{lemma}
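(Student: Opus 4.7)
The strategy is a mapping cylinder construction. Set
\[
\hat{A} \coloneqq \bigl\{(a,f)\in A\oplus C([0,1],B) : f(0)=\varphi(a)\bigr\},
\]
which is a \cstar-subalgebra of $A\oplus C([0,1],B)$. Embed $A$ into $\hat{A}$ via $\iota(a) = (a,\mathrm{const}_{\varphi(a)})$, and define $\hat{\varphi}\colon \hat{A} \to B$ by $\hat{\varphi}(a,f) = f(1)$. Then $\hat{\varphi}\circ \iota = \varphi$, so $\hat{\varphi}$ extends $\varphi$ in the sense of the diagram, and $\hat{\varphi}$ is surjective since every $b\in B$ is hit by, e.g., $(0, t\mapsto tb)$.

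For the completely positive splitting, I would set $s(b) \coloneqq (0, t\mapsto tb)$. The element $(0, t\mapsto tb)$ indeed lies in $\hat{A}$ because the $C([0,1],B)$-component vanishes at $t=0$, which agrees with $\varphi(0)=0$. The map $s$ is linear and sends positive elements of $B$ to positive elements of $\hat{A}$ (since $t\ge 0$ on $[0,1]$); the same argument at every matrix level shows $s$ is completely positive, and $\hat{\varphi}(s(b)) = 1\cdot b = b$.

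It remains to show that $\iota\colon A \hookrightarrow \hat{A}$ is a homotopy equivalence. The coordinate projection $r\colon \hat{A}\to A$, $r(a,f)=a$, is a $*$-homomorphism satisfying $r\circ\iota = \mathrm{id}_A$. To retract $\iota\circ r$ to the identity on $\hat{A}$, define
\[
H_u\colon \hat{A}\to \hat{A}, \qquad H_u(a,f) \coloneqq \bigl(a,\ t\mapsto f(ut)\bigr), \qquad u\in[0,1].
\]
Each $H_u$ is a well-defined $*$-homomorphism (the condition $(f(ut))|_{t=0}=f(0)=\varphi(a)$ holds for every $u$), $u\mapsto H_u$ is pointwise norm-continuous, $H_1 = \mathrm{id}_{\hat{A}}$, and $H_0 = \iota\circ r$. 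Hence $\iota\circ r\simeq \mathrm{id}_{\hat{A}}$, establishing the homotopy equivalence.

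There is no serious obstacle: the construction is the standard \cstar-algebraic mapping cylinder, and the only checks are that $\hat{A}$ is closed under the relevant operations, that $s$ lands in $\hat{A}$ and is completely positive (both immediate from the formula), and that the obvious deformation $f\mapsto f(u\cdot)$ gives a $*$-homomorphism-valued homotopy rather than just a family of completely positive maps. The one place to be mildly careful is the endpoint condition in the definition of $\hat{A}$: with the convention $f(0)=\varphi(a)$ chosen above, the inclusion uses constant paths, the extension $\hat{\varphi}$ reads off the value at $t=1$, and the splitting uses paths that vanish at $t=0$; swapping the roles of $0$ and $1$ would force a corresponding swap everywhere but not change the argument.
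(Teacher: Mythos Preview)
Your mapping cylinder argument is correct: all the verifications (well-definedness of $\hat A$, surjectivity of $\hat\varphi$, complete positivity of $s$, and the homotopy $H_u$ being \emph{multiplicative}) go through as you say.

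This is, however, a genuinely different construction from the paper's. The authors take $\hat A \coloneqq A * CB$, the full free product with the cone, set $\hat\varphi \coloneqq \varphi * \mathrm{ev}_1$, and obtain the homotopy inverse as $\mathrm{id}_A * 0$ (using contractibility of $CB$); their c.p.\ splitting is the same map $b\mapsto (t\mapsto tb)$ into $CB$, followed by the canonical embedding $CB \hookrightarrow A*CB$. Your $\hat A$ is instead the pullback of $\varphi$ along $\mathrm{ev}_0\colon C([0,1],B)\to B$, a much smaller algebra sitting inside $A\oplus C([0,1],B)$. The trade-off: the free product approach makes the homotopy equivalence essentially tautological (contract the $CB$ factor), whereas you have to write down the explicit reparametrisation $H_u$; on the other hand, your mapping cylinder is more elementary, avoids the machinery of $\cstar$-free products, and---since it is literally a pullback---meshes naturally with the pullback comparisons used later in Section~\ref{sec:geores}. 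Either route yields exactly what is needed for Theorem~\ref{thm:existence-resolutions}.
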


\begin{proof}
We make use of free products. Let $CB$ denote the cone over $B$, i.e., $CB = C_0(0,1] \otimes B$. Set $\hat{A}\coloneqq A\ast CB$ and $\hat{\varphi}\coloneqq \varphi\ast \T{ev}_1$. Then $\id_A\ast 0$ is a homotopy-inverse for the inclusion of $A$ into $\hat{A}$, proving the homotopy equivalence statement.
The extension $\hat{\varphi}$ is surjective by construction and admits a c.p.c. split by composing $B \to CB:   b \mapsto \iota \otimes  b$, where $\iota(t) := t$, with the inclusion $CB \to \hat{A}$.
\end{proof}

The next theorem shows that there exist sufficiently many $\partial$-free geometric resolutions.

\begin{thm}
\label{thm:existence-resolutions}
Let $A$ be a separable $\cstar$-algebra satisfying the $\T{UCT}$ and let  
\[
	0\to P_* \to Q_* \stackrel{p}\longrightarrow K_*(A) \to 0
\]
 be a countable, projective resolution of $K_*(A)$.
Then, there exists a $\partial$-free geometric resolution consisting of separable $\cstar$-algebras satisfying the $\T?{UCT}$, which induces the given resolution of $K_*(A)$. 
\end{thm}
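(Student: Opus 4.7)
The plan is to construct the extension in three stages: (i) realize the given projective resolution as a $\partial$-free geometric resolution of a Kirchberg model $A'$ of $A$, (ii) pull this extension back along a $\Star$-homomorphism $\varphi\colon A \to A'$ realizing a KK-equivalence, and (iii) use Lemma \ref{lem:trick} to ensure the semisplit structure survives the transfer intact.

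First I would exploit the UCT hypothesis to find a separable stable Kirchberg algebra $A'$ in the UCT class with $K_*(A') \cong K_*(A)$, and realize the KK-equivalence between $A$ and $A'$ by an honest $\Star$-homomorphism $\varphi\colon A \to A'$ (using the classification of Kirchberg algebras, together with the UCT to turn a KK-equivalence into a $\Star$-homomorphism). Via the isomorphism $\varphi_*$, the given projective resolution $0 \to P_* \to Q_* \to K_*(A) \to 0$ is identified with a projective resolution of $K_*(A')$.

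Second, appealing to range results for extensions of Kirchberg algebras (as in R{\o}rdam's classification of absorbing extensions), I would realize this identified resolution as the K-theory six-term sequence of a semisplit short exact sequence
\[ 0 \to I \to E' \xrightarrow{\pi} A' \to 0 \]
of separable UCT Kirchberg algebras, with $K_*(I) \cong P_*$ and $K_*(E') \cong Q_*$, and both boundary maps vanishing so that the sequence degenerates. Semisplitness is automatic by nuclearity via Choi--Effros.

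Finally, I would apply Lemma \ref{lem:trick} to $\varphi$, yielding $A \subseteq \hat A$ with the inclusion a homotopy equivalence and a surjective $\hat\varphi\colon \hat A \to A'$ extending $\varphi$ with c.p.c.\ split. Form the pullback $E \coloneqq \hat A \times_{A'} E'$; the c.p.c.\ splits of $\hat\varphi$ and $\pi$ compose to give a c.p.c.\ split for $E \to \hat A$. Pulling back once more along $A \hookrightarrow \hat A$ gives a semisplit extension $0 \to I \to E_A \to A \to 0$. By naturality of the six-term sequence and the five lemma (using that $\varphi_*$ is an isomorphism), $K_*(E_A) \cong K_*(E') \cong Q_*$ and the boundary maps for the pulled-back extension vanish, so its K-theory sequence is precisely the desired projective resolution. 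The UCT for $E_A$ follows from Lemma \ref{lem:semisplit} applied to the Kirchberg algebra $I$ and to $A$. The main obstacle is the first step: representing the KK-equivalence $A \sim A'$ by an honest $\Star$-homomorphism rather than merely by a KK-class. Once that is available, Lemma \ref{lem:trick} conveniently absorbs the need for surjectivity and a c.p.c.\ split of $\varphi$, and the remaining identifications are routine diagram chases.
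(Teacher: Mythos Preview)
You correctly flag your own weak point, and it is a genuine gap: in step (i) you need a $\Star$-homomorphism $\varphi\colon A \to A'$ into a Kirchberg algebra inducing a $KK$-equivalence, but the lifting results you appeal to (turning $KK$-classes into $\Star$-homomorphisms, as in Kirchberg--Phillips or R{\o}rdam's book) require the \emph{domain} to be a Kirchberg algebra. Here $A$ is an arbitrary separable UCT $\cstar$-algebra, possibly non-nuclear and non-simple, and there is no general mechanism producing such a $\varphi$. Your outline does go through if one additionally assumes $A$ nuclear, but the theorem as stated does not carry that hypothesis.

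The paper sidesteps this by reversing the direction of the $\Star$-homomorphism. Rather than mapping $A$ into a Kirchberg model, it takes a unital UCT-Kirchberg algebra $B$ with $K_*(B)\cong Q_*\oplus(\Z,0)$ and uses the UCT for $B$ together with the $KK$-lifting theorem \emph{out of} Kirchberg algebras to produce $\varphi\colon B \to \tilde A\otimes\mathcal O_\infty\otimes\mathbb K$ realizing $p\oplus\id$ on $K$-theory. Lemma~\ref{lem:trick} is then applied to this $\varphi$ to make it a semisplit surjection $\overline\varphi$; its kernel $I$ automatically has $K_*(I)\cong P_*$. Restricting the resulting extension over $A\subseteq\tilde A\subseteq \tilde A\otimes\mathcal O_\infty\otimes\mathbb K$ yields the desired $\partial$-free resolution of $A$. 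Note also that your step (iii) is over-engineered: the pullback of a semisplit extension along any $\Star$-homomorphism is already semisplit (using only the split of $\pi$), so Lemma~\ref{lem:trick} would play no role there even if step (i) were available.
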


\begin{proof}
Let $B$ be a unital UCT-Kirchberg algebra with $K_*(B)=Q_*\oplus(\mathbb{Z},0) = (Q_0 \oplus \Z, Q_1)$.
Let us identify $p \oplus \id$ with an element of $\mathrm{Hom}(K_*(B),K_*(\tilde A))$:
\[
	\xymatrix{
		 Q_*\oplus(\mathbb{Z},0)  \ar[r]^-{p\oplus\id}  & K_*(A)\oplus(\mathbb{Z},0) \\
		 K_*(B) \ar[u]_{\cong} \ar[r] & K_*(\tilde A) \ar[u]^\cong 
	}
\] 
Since $B$ satisfies the UCT,  we may lift $p \oplus \id$ to an element in $KK(B,\tilde{A}) \cong KK(B, \tilde A \otimes \mathcal O_\infty)$.
By  \cite[Theorem 8.2.1~(i)]{Rordam:Book}, this KK-element can be realized by a $\Star$-homomorphism
	$$
		\varphi\colon B\to (\tilde{A}\otimes\mathcal{O}_\infty)\otimes \mathbb{K}.
	$$
Now apply Lemma \ref{lem:trick} to obtain a semisplit extension
\[
	\begin{xy}
		\xymatrix{0 \ar[r] & I \ar[r] & \overline{B} \ar[r]^(.3){\overline{\varphi}} & \tilde{A}\otimes\mathcal{O}_\infty \otimes \mathbb{K} \ar[r] & 0}
	\end{xy}
\]
 such that $B \subseteq \overline B$ is a homotopy equivalence. After identifying $K_*(\overline B)$ with $K_*(B)$ and $K_*(\tilde A \otimes \mathcal O_\infty \otimes \mathbb K)$ with $K_*(\tilde A)$, we see that  $K_*(\overline{\varphi})$ is isomorphic to $p\oplus\id$. Since $\overline{B}\sim_{KK} B$ and $ \tilde{A}\otimes\mathcal{O}_\infty \otimes \mathbb{K} \sim_{KK} \tilde{A}$, we see that $\overline B$ and $\tilde{A}\otimes\mathcal{O}_\infty \otimes \mathbb{K}$ satisfy the UCT.  As the extension is semisplit, we find $I$ to satisfy the UCT by Lemma \ref{lem:semisplit}.
Next, we need to restrict this extension to $A\subseteq \tilde{A} \subseteq (\tilde{A}\otimes\mathcal{O}_\infty)\otimes \mathbb{K}$. We do this in two steps; first consider the subextension
\[
	\begin{xy}
		\xymatrix{0 \ar[r] & I \ar[r] & \overline{\varphi}^{-1}(\tilde{A}) \ar[r]^{\overline{\varphi}} & \tilde{A} \ar[r] & 0},
	\end{xy},
\]
which is still semiplit (with the restriction of the split from before). Hence by the same two-out-of-three argument we find $\overline{\varphi}^{-1}(\tilde{A})$ to satisfy the UCT. The five-lemma applied to the K-theory of this extension shows that  this subextension still induces $p\oplus \id$. More precisely, the following diagram commtues:
\[
	\xymatrix{
		0 \ar[r] & K_*(I) \ar@{=}[d] \ar[r] & K_*(\overline \varphi^{-1}(\tilde A)) \ar[r]^{K_*(\overline \varphi)} \ar[d]^{\cong}  & K_*(\tilde A) \ar[r]  \ar[d]^{\cong} &  0 \\
		0 \ar[r] & K_*(I) \ar[r] & K_*(\overline B) \ar[r]_-{K_*(\overline \varphi)} & K_*(\tilde A \otimes \mathcal O_\infty \otimes \mathbb K) \ar[r] & 0 
	}
\] 
Finally, we pass to the subextension
\[
	\begin{xy}
		\xymatrix{0 \ar[r] & I \ar[r] & \overline{\varphi}^{-1}(A) \ar[r]^-{\overline{\varphi}} & A \ar[r] & 0}
	\end{xy},
\]
which is again semiplit (again, by restricting the split from before) and therefore consists of $\cstar$-algebras satisfying the UCT.
Since $\overline{\varphi}^{-1}(\tilde{A}) / \overline{\varphi}^{-1}(A)$ is isomorphic to $\tilde{A}/A$, this extension models $p\colon Q_*\to K_*(A)$ on K-theory. 
This finishes the proof.
\end{proof}

The following will allow us to compare any two given $\partial$-free geometric resolutions. It further shows that our construction is natural with respect to $\Star$-homomorphisms, cf.~Remark \ref{rem:natural}.

\begin{lemma}
\label{lem:comparison}
Given two $\partial$-free geometric resolutions
\[
	\xymatrix@R-1em{
		0\ar[r]&I\ar[r]^-\iota &E\ar[r]^-\pi&A\ar[r]&0 ,
 \\
 		0\ar[r]&I'\ar[r]^-{\iota'} &E'\ar[r]^-{\pi'}&A'\ar[r]&0
	}
\]
of  $\cstar$-algebras $A,A'$ and a $\Star$-homomorphism $\varphi\colon A\to A'$, form the pullback 
\[
\xymatrix{
\overline E \ar[d]_-p \ar[r]^-{p'} & E' \ar[d]^-{\pi'} \\
E \ar[r]_-{\varphi\circ\pi} & A' 
}
\]
\AS{The notation $\overline E$ should not be confused with the notation from Lemma \ref{lem:trick}. I suggest to change this.}
Then, with $\overline \pi \coloneqq \pi\circ p$ and $\overline I \coloneqq \T{Ker}(\overline \pi)$, the following diagram commutes and the middle row is a $\partial$-free geometric resolution of $A$.
\begin{equation}
	\label{eq:pullback-resolution}
	\begin{aligned}
	\xymatrix{
		0 \ar[r] &  I' \ar[r] &  E' \ar[r]^{\pi'} & A' \ar[r] & 0 \\
		0 \ar[r] & \overline I \ar[r] \ar[u]_{q'} \ar[d]^q & \overline E \ar[r]^{\overline \pi} \ar[d]^p \ar[u]_{p'} & A \ar@{->}[r] \ar@{=}[d] \ar[u]_\varphi& 0  \\
		0 \ar[r] & I \ar[r] & E \ar[r]^{\pi} & A \ar[r] & 0 
	}
		\end{aligned}
\end{equation}
If the geometric resolutions of $A$ and $A'$ consist of  separable $\cstar$-algebras satisfying the $\T{UCT}$, then $\overline I$ and $\overline E$ are separable and satisfy the $\T{UCT}$.
\end{lemma}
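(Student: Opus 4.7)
The plan is to exploit the pullback structure of $\overline E$ by extracting a second semisplit extension $0 \to I' \to \overline E \xrightarrow{p} E \to 0$ (whose kernel is $I' = \ker p$), and to transfer all K-theoretic and UCT properties from the given resolutions via naturality of the index map. First I would handle the direct verifications. Commutativity of the right half of diagram (\ref{eq:pullback-resolution}) is immediate: the top-right square $\pi' \circ p' = \varphi \circ \pi \circ p$ is the pullback identity, and the bottom-right is the definition $\overline \pi = \pi \circ p$. From the concrete description $\overline E = \{(e,e') \in E \oplus E' : \varphi(\pi(e)) = \pi'(e')\}$, an element of $\overline I = \ker \overline \pi$ is a pair $(e,e')$ with $e \in I$ and automatically $e' \in I'$; hence $\overline I \cong I \oplus I'$ as $\cstar$-algebras, with $q, q'$ the coordinate projections (so $K_*(\overline I) = K_*(I) \oplus K_*(I')$ is projective). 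Surjectivity of $\overline \pi$ follows from that of $\pi'$, and a c.p.c.\ split of $\overline \pi$ is $a \mapsto (s(a), s'(\varphi(a)))$, using c.p.c.\ splits $s,s'$ of $\pi, \pi'$.

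The key technical step is to analyze the auxiliary extension $0 \to I' \to \overline E \xrightarrow{p} E \to 0$. This is semisplit via $e \mapsto (e, s'(\varphi(\pi(e))))$, and maps to the given top resolution $0 \to I' \to E' \to A' \to 0$ via $(\id_{I'}, p', \varphi \circ \pi)$. By naturality of the index map, its boundary equals $\partial_{E'/I'} \circ K_*(\varphi \pi) = 0$, so the six-term sequence collapses to a short exact sequence $0 \to K_*(I') \to K_*(\overline E) \to K_*(E) \to 0$. This splits because $K_*(E)$ is projective, yielding $K_*(\overline E) \cong K_*(I') \oplus K_*(E)$, again projective. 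In the UCT situation, Lemma \ref{lem:semisplit} applied to this semisplit extension then gives $\overline E$ the UCT, and $\overline I \cong I \oplus I'$ is UCT as a direct sum.

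Finally, I would show the middle row itself is $\partial$-free. Applying naturality of the index map to the two morphisms of extensions in (\ref{eq:pullback-resolution}) yields $K_*(q) \circ \partial_{\overline E/\overline I} = \partial_{E/I} = 0$ and $K_*(q') \circ \partial_{\overline E/\overline I} = \partial_{E'/I'} \circ K_*(\varphi) = 0$; under the identification $K_*(\overline I) \cong K_*(I) \oplus K_*(I')$, the pair $(K_*(q), K_*(q'))$ is the identity, forcing $\partial_{\overline E/\overline I} = 0$. The resulting short exact sequence $0 \to K_*(\overline I) \to K_*(\overline E) \to K_*(A) \to 0$, together with the projectivity established above, delivers the desired $\partial$-free geometric resolution of $A$. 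The main obstacle is organizing naturality so that the $\partial$-freeness of both given resolutions can be transferred through the pullback; extracting the auxiliary extension $0 \to I' \to \overline E \to E \to 0$ is the linchpin that handles projectivity, UCT, and the boundary computation in one stroke.
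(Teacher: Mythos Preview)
Your argument is correct, and it takes a genuinely different route from the paper's. The paper computes $K_*(\overline E)$ via the Mayer--Vietoris sequence for the pullback square, using surjectivity of $\pi'_*$ to kill the connecting map $\gamma$ and to show $p_*$ is surjective; from the latter it deduces that $\overline\pi_*$ is surjective (hence $\partial_{\overline E/\overline I}=0$), and it gets freeness of $K_*(\overline E)$ by embedding it in $K_*(E)\oplus K_*(E')$. You instead extract the auxiliary semisplit extension $0\to I'\to \overline E\xrightarrow{p} E\to 0$ and kill its boundary by naturality against $0\to I'\to E'\to A'\to 0$; this gives the more explicit splitting $K_*(\overline E)\cong K_*(I')\oplus K_*(E)$ and handles the UCT for $\overline E$ in the same breath. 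For $\partial$-freeness of the middle row you again use naturality of the index map against both given resolutions, rather than factoring through surjectivity of $p_*$. Your approach avoids invoking Mayer--Vietoris and yields a cleaner identification of $K_*(\overline E)$; the paper's approach has the virtue of treating the pullback symmetrically and reading everything off a single long exact sequence. Both are short and self-contained.
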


\begin{proof}
The K-groups of $\overline{E}$ can be computed using the long exact Mayer-Vietoris sequence (cf.~\cite[Theorem 21.2.2]{Blackadar:KBook}):
\[
	\xymatrix{
	 K_{*-1}(A') \ar[r]^-\gamma & K_*(\overline{E}) \ar[rr]^-{p_*\oplus p'_*} & &  K_*(E)\oplus K_*(E') \ar[rr]^-{\begin{pmatrix}   (\varphi\circ\pi)_* \\  -\pi'_* \end{pmatrix} } & &  K_*(A') \ar[d]^-\gamma \\
	 \vdots \ar[u] &  & &  & & \vdots
	}
\]
Using surjectivity of $\pi'_*$ one finds $\gamma=0$ and $p_*$ to be surjective.\footnote{For $x \in K_*(E)$, there exists $y \in K_*(E')$ such that $\pi'_*(y)=(\varphi\circ\pi)_*(x)$. Thus, $(x,y) \in \T{Ker} \begin{pmatrix}   (\varphi\circ\pi)_* \\  -\pi'_* \end{pmatrix} =\mathrm{Im}(p_*\oplus p'_*)$, and so $x \in \mathrm{Im}(p_*)$.} In particular, $(\pi\circ p)_* = (\overline \pi)_*$ is also surjective and the K-theory six-term sequence associated the the middle row of \eqref{eq:pullback-resolution} therefore degenerates to
\[
	0\to K_*(\overline{I}) \to K_*(\overline{E}) \to K_*(A) \to 0.
\]
This is a projective resolution of $K_*(A)$ since, by exactness of the long sequence above, $K_*(\overline{E})$ is a subgroup of the free group $K_*(E)\oplus K_*(E')$ and is thereby free itself. Furthermore, if $s$ resp. $s'$ are c.p.c. splits for $\pi$ resp. $\pi'$, then the map 
$$
	\overline s \colon A \to \overline E : a \mapsto (s(a),s'(\varphi(a))) 
$$
is a c.p.c. split for $\overline \pi$. This shows that the middle row in (\ref{eq:pullback-resolution}) is a $\partial$-free geometric resolution for $A$.
\par For the remaining statement note that $\overline I = I \oplus I'$, which shows that $\overline I$ satisfies the UCT if $I$ and $I'$ do. Since the extension is semisplit, Lemma \ref{lem:semisplit} shows that also $\overline E$ satisfies the UCT. Separability of $\overline I$ and $\overline E$ follows by a standard two-out-of-three argument, as well.
\end{proof}

\section{The K\"unneth formula}
\label{sec:KT}

We give a revised description of the K\"unneth formula using the $\partial$-free geometric resolutions from Section \ref{sec:geores}. 
Before we start constructing the K\"unneth sequence, we make some conventions on how to treat the Tor-terms that arise. 

\begin{defn}
\label{def:tor}
	Let $G_0,G_1$ be abelian groups and let
	\[
		0 \to P_i \to Q_i \to G_i \to 0 \qquad (i=0,1)
	\]
	be a projective resolution of $G_i$. Then, the left $\mathrm{Tor}$-functor $\LTor$ and the right $\mathrm{Tor}$ functor $\RTor$ are defined by fitting into the following exact sequences:
	\begin{align*}
		0 \to \LTor(G_0,G_1) \to P_0 \otimes G_1 \to Q_0 \otimes G_1 \to G_0 \otimes G_1 \to 0,  
	\\
		0 \to \RTor(G_0,G_1) \to G_0 \otimes P_1 \to G_0 \otimes Q_1 \to G_0 \otimes G_1 \to 0 .
	\end{align*}
	The functors $\LTor$ and $\RTor$ are natural in both variables and well-defined up to natural isomorphisms (coming from different choices of projective resolutions).
\end{defn} 

While basic homological algebra tells us that $\LTor(G_0,G_1)$ is naturally isomorphic to $\RTor(G_0,G_1)$ (and therefore simply denoted by $\Tor(G_0,G_1)$), we need to fix one concrete realization of the Tor-functor for the upcoming computations. We prefer to work in the LTor-picture, but we point out that all constructions have obvious analogues using RTor instead. This would yield the same results up to isomorphism, which is made precise in Remark \ref{rem:right}. Hence, for the remainder of this paper, we use Tor and LTor synonymously.

\begin{remark}
\label{rem:semisplit}
	Throughout the rest of the paper, we will use the following fact without reference. Let 
	$$
		0 \to I \to E \to A \to 0
	$$
	be a semisplit short exact sequence. Then, for any $\cstar$-algebra $B$, the sequences 
	$$
		0 \to I \otimes B \to E \otimes B \to A \otimes B \to 0
	$$
	and 
	$$
		0 \to B \otimes  I \to B \otimes  E \to B \otimes A \to 0
	$$
	are exact, which follows from \cite[Theorem 3.2]{EH}. This remark applies in particular to $\partial$-free geometric resolutions.
\end{remark}

\begin{notation}
		Given a $\cstar$-algebra $B$ and a semisplit short exact sequence 
	\[
		\xymatrix{
			0 \ar[r] & I \ar[r] & E \ar[r]  & A \ar[r] & 0,
		}
	\]
    we obtain another short exact sequence by tensoring with $B$ from the left resp. right. The induced boundary maps are denoted by 
  		\begin{align*}
  			\partial_A \colon K_*(B \otimes A) \to K_*(B \otimes I),  \\
  			{}_A \partial \colon K_*(A \otimes B) \to K_*(I \otimes B).
  		\end{align*}
  		These maps are odd.
\end{notation}

For $\cstar$-algebras $A,B$ and for $i,j \in \Z/2\Z$, we recall the map 
$$
	\alpha_{A,B}:K_i(A)\otimes K_j(B) \to K_{i+j}(A\otimes B), $$ 
	as defined in \cite[Chapter 23]{Blackadar:KBook}. See also Section \ref{sec:proof-KT-Flip-corrected} for an explicit description.
When $A$ is in the UCT class and $K_*(A)$ is torsion-free, $\alpha_{A,B}$ is an isomorphism (by the K\"unneth formula, \cite[Theorem 23.3.1]{Blackadar:KBook}).

\begin{defn}
	\label{def:beta}
	Let $A,B$ be separable $\cstar$-algebras with $A$ in the $\T{UCT}$ class.  Given any $\partial$-free geometric resolution 
	\[
		\xymatrix{
			0 \ar[r] & I \ar[r]^\iota & E \ar[r]^\pi & A \ar[r]  & 0
		}
	\]
	of $A$ (as in Definition \ref{def:geores}), consisting of separable $\cstar$-algebras in the $\T{UCT}$ class, we define an odd map 
	\[
			\beta_{A,B} \coloneqq \alpha_{I,B}^{-1}\circ {}_A\partial \colon K_*(A \otimes B) \to \Tor(K_*(A),K_*(B)).
	\]
	Note that such a $\partial$-free geometric resolution exists by Theorem \ref{thm:existence-resolutions}.
\end{defn}

\begin{prop}
\label{prop:kunnethformula}
Let $A,B$ be separable $\cstar$-algebras with $A$ in the $\T{UCT}$ class, and let
	\[
		\xymatrix{
			0 \ar[r] & I \ar[r]^\iota & E \ar[r]^\pi & A \ar[r]  & 0
		}
	\]
	be a  $\partial$-free geometric resolution of $A$ consisting of separable $\cstar$-algebras in the $\T{UCT}$ class. This defines $\beta_{A,B}$ as above.
	Then, the range of $\beta_{A,B}$ (which is a priori contained in $K_*(I)\otimes K_*(B)$) is indeed contained in $\Tor(K_*(A),K_*(B))$ and the following diagram commutes:

\begin{equation}
	\label{eq:kunnethformula}
	\begin{aligned}
	\xymatrix{
	K_*(A \otimes B) \ar[r]^-{\beta_{A,B}} \ar[d]_{ {}_A \partial}  & \Tor(K_*(A),K_*(B)) \ar[d]^\subseteq \\
		K_*(I \otimes B) \ar[d]_{(\iota \otimes \id)_*} & K_*(I) \otimes K_*(B) \ar[d]^{\iota_* \otimes \id} \ar[l]^{\alpha_{I,B}}_\cong \\
		K_*(E \otimes B) & K_*(E) \otimes K_*(B) \ar[l]^{\alpha_{E,B}}_\cong
	}
	\end{aligned}
\end{equation}
	The map $\beta_{A,B}$ depends only on the choice of a $\partial$-free geometric resolution up to natural isomorphism coming from such a choice\footnote{But note that Tor-groups are only defined up to such a natural isomorphism.},
	and $\beta_{A,B}$ fits into the K\"unneth formula for tensor products, i.e., it makes the sequence 
	\[
		\xymatrix{
			 0 \ar[r] & K_*(A) \otimes K_*(B) \ar[r]^-{\alpha_{A,B}} & K_*(A \otimes B) \ar[r]^-{\beta_{A,B}} & \Tor(K_*(A),K_*(B)) \ar[r] & 0
		}
	\]
	exact.
\end{prop}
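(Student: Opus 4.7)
The plan is to combine the six-term exact sequence obtained by tensoring the $\partial$-free geometric resolution with $B$ with the fact that $K_*(I)$ and $K_*(E)$ are \emph{free} (being projective abelian groups), so that the K\"unneth isomorphisms $\alpha_{I,B}$ and $\alpha_{E,B}$ are available via \cite[Theorem 23.3.1]{Blackadar:KBook}.

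First I would set up the commutativity of (\ref{eq:kunnethformula}). The upper square commutes tautologically from the definition $\beta_{A,B} = \alpha_{I,B}^{-1} \circ {}_A\partial$, while the lower square commutes by naturality of $\alpha$ applied to $\iota \otimes \id_B$. To see that $\mathrm{im}(\beta_{A,B}) \subseteq \Tor(K_*(A),K_*(B))$, I would observe that the six-term exact sequence associated with $0 \to I \otimes B \to E \otimes B \to A \otimes B \to 0$ (exact by Remark \ref{rem:semisplit}) yields $\mathrm{im}({}_A\partial) = \ker((\iota \otimes \id)_*)$. By naturality of $\alpha$ and injectivity of $\alpha_{E,B}$, this kernel corresponds under $\alpha_{I,B}^{-1}$ to $\ker(\iota_* \otimes \id_{K_*(B)})$, and in the LTor realization (Definition \ref{def:tor}) applied to the free resolution $0 \to K_*(I) \to K_*(E) \to K_*(A) \to 0$, this kernel is exactly $\Tor(K_*(A),K_*(B))$. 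Hence $\beta_{A,B}$ lands in, and is surjective onto, Tor.

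Next, for exactness of the K\"unneth sequence: since $\alpha_{I,B}^{-1}$ is an isomorphism, $\ker(\beta_{A,B}) = \ker({}_A\partial)$, which by six-term exactness equals $\mathrm{im}((\pi \otimes \id)_*)$. Naturality of $\alpha$ with respect to $\pi \otimes \id_B$ identifies this with $\alpha_{A,B}\bigl((\pi_* \otimes \id)(K_*(E) \otimes K_*(B))\bigr) = \mathrm{im}(\alpha_{A,B})$, using surjectivity of $\pi_*$. For injectivity of $\alpha_{A,B}$, a short diagram chase suffices: given $x \in \ker(\alpha_{A,B})$, lift it to $y \in K_*(E) \otimes K_*(B)$ under the surjection $\pi_* \otimes \id$; then $(\pi \otimes \id)_*(\alpha_{E,B}(y)) = 0$, so by six-term exactness $\alpha_{E,B}(y) = (\iota \otimes \id)_*(z)$ for some $z$. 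Writing $z = \alpha_{I,B}(w)$ and using naturality together with injectivity of $\alpha_{E,B}$ gives $y = (\iota_* \otimes \id)(w)$, whence $x = ((\pi \circ \iota)_* \otimes \id)(w) = 0$.

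For the independence of $\beta_{A,B}$ from the chosen resolution (up to natural Tor isomorphism), I would invoke Lemma \ref{lem:comparison} with $A = A'$ and $\varphi = \id_A$ to produce a common pullback resolution dominating any two given ones. Naturality of ${}_A\partial$ with respect to morphisms of short exact sequences, together with naturality of $\alpha_{I,B}$, shows that the comparison map on $K_*(I) \otimes K_*(B)$ intertwines the two candidate $\beta$-maps; restricted to Tor, this comparison is precisely the natural isomorphism coming from comparing the two free resolutions of $K_*(A)$. The main obstacle will be this last independence statement: the range and exactness arguments are standard chases once $\alpha_{I,B}$ and $\alpha_{E,B}$ are known to be isomorphisms, but matching the K-theoretic comparison produced by the Mayer--Vietoris construction of Lemma \ref{lem:comparison} with the purely algebraic Tor-comparison isomorphism requires carrying both bookkeepings through in parallel.
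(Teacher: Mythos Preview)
Your proposal is correct and follows essentially the same approach as the paper: naturality of $\alpha$ for the lower square, six-term exactness to identify the image of ${}_A\partial$ with $\ker((\iota\otimes\id)_*)\cong\Tor$, unsplicing the long exact sequence for the K\"unneth short exact sequence, and the pullback comparison from Lemma~\ref{lem:comparison} for independence. The only presentational differences are that the paper draws out the independence argument as an explicit cube diagram (whose faces commute by naturality of $\alpha$, of ${}_A\partial$, and of $\Tor$), whereas you sketch it verbally and flag it as the delicate step; conversely, your explicit chase for injectivity of $\alpha_{A,B}$ is a bit more detailed than the paper's ``after checking that the maps match up''.
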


\begin{proof}
The top square of \eqref{eq:kunnethformula} commutes by definition of $\beta_{A,B}$ (once we show that the range of $\beta_{A,B}$ is correct).
Observe that the lower square of \eqref{eq:kunnethformula} commutes by naturality of the map $\alpha$ and that $\alpha_{I,B},\alpha_{E,B}$ are isomorphisms since $I,E$ satisfy the UCT and have free K-groups.
Since the left-hand side of the diagram is taken from the six-term exact sequence in K-theory associated to the extension $$
	0\to I\otimes B\to E\otimes B\to A\otimes B\to 0,
$$ 
one sees that $\beta_{A,B}$ does in fact map to the kernel of $\iota_*\otimes\id$, which is $\Tor(K_*(A),K_*(B))$ by definition.

To see that the definition does not depend on the particular choice of a $\partial$-free geometric resolution, let $0\to I'\to E'\to A\to 0$ be another $\partial$-free geometric resolution for $A$ consisting of separable $\cstar$-algebras in the UCT class and denote by $\beta'_{A,B}$ the correponding map described in Definition \ref{def:beta}.
In order to compare the maps $\beta_{A,B}$ and $\beta_{A,B}'$ obtained from the two different resolutions, we apply the pullback construction described in Lemma \ref{lem:comparison} to find the commutative diagram
\[
	\xymatrix{
		0\ar[r]&I\ar[r]^-\iota &E\ar[r]^-\pi&A\ar[r] &0 \\
			0 \ar[r] & \overline I  \ar[u]^q \ar[d]_{q'} \ar[r]^{\overline \iota} & \overline E \ar[u]^p \ar[d]_{p'} \ar[r]^{\overline \pi} & A \ar[r]  \ar@{=}[u] \ar@{=}[d] & 0  \\
			0\ar[r]&I'\ar[r]^-{\iota'} &E'\ar[r]^-{\pi'}&A\ar[r]&0
	}
\]
with exact rows. By Lemma \ref{lem:comparison}, the middle row is again a $\partial$-free geometric resolution such that $\overline I$ and $\overline E$ satisfy the UCT and  therefore satisfies the hypothesis of Definition \ref{def:beta} and also yields a map $\overline{\beta}_{A,B}$. Using the definitions for the different $\beta$'s, we find the diagram 
\[
\resizebox{\textwidth}{!}{
	\xymatrix{
		& K_{*+1}(A\otimes B) \ar@{=}[rr] \ar[dl]_-{\beta_{A,B}} \ar[dd]_(.3){{}_A \partial} && K_{*+1}(A\otimes B) \ar[dl]_-{\overline{\beta}_{A,B}} \ar[dd]_(.3){{}_A \partial}\\
		\Tor(K_*(A),K_*(B)) \ar[dd]_(.7)\subseteq && \ar[ll]_(.4)\cong \Tor(K_*(A),K_*(B)) \ar[dd]_(.7)\subseteq\\
		& K_*(I \otimes B) && K_*(\overline I \otimes B) \ar[ll]_(.7){(q \otimes \id)_*} \\
		K_*(I) \otimes K_*(B)  \ar[ur]^\cong_{\alpha_{I,B}} && K_*(\overline I) \otimes K_*(B) \ar[ll]_{q_* \otimes \id} \ar[ur]^\cong_{\alpha_{\overline I, B}}
		}
	}
\]
We need to show that the top face of the cube commutes.  This follows if all other faces commute. The left and right faces commute by definition of $\beta_{A,B}$ and $\overline \beta_{A,B}$, the face on the back commutes by naturality of the boundary maps ${}_A \partial$. The bottom face commutes by naturality of $\alpha$ and the front commutes by naturality of the Tor-functor. Hence $\beta_{A,B}$ agrees with $\overline{\beta}_{A,B}$ up to the natural isomorphism coming from the different choices of projective resolutions for $K_*(A)$. 
Since Tor-groups are only defined up to such isomorphisms, we have  $\beta_{A,B}=\overline{\beta}_{A,B}$.
The same argument applies to the second geometric realization, so that  $\beta_{A,B}'=\overline{\beta}_{A,B}=\beta_{A,B}$.

Finally, the long exact sequence
\[
	\xymatrix{
		  K_*(I\otimes B) \ar[r]^{(\iota\otimes\id)_*} & K_*(E\otimes B) \ar[r]^{(p\otimes\id)_*} & K_*(A\otimes B) \ar[r]^{{}_A\partial} &  K_*(I\otimes B) \ar[d] \\ 
		\vdots \ar[u] & & & \vdots
	}
\]
unsplices to
\[
	\xymatrix{
		0 \ar[r] & \mathrm{Coker}((\iota\otimes\id)_*) \ar[r] & K_*(A\otimes B) \ar[r] &  \mathrm{Ker}((\iota\otimes\id)_*) \ar[r] & 0.
	}
\]
Now, since $\alpha_{I,B}$ and $\alpha_{E,B}$ are isomorphisms, one finds
\[
	 \mathrm{Coker}((\iota\otimes\id)_*) \cong \mathrm{Coker}(\iota_*\otimes\id) \cong K_*(A)\otimes K_*(B)
\]
and
\[
	\mathrm{Ker}((\iota\otimes\id)_*) \cong \mathrm{Ker}(\iota_*\otimes\id) \cong \Tor(K_*(A),K_*(B)),
\]
which, after checking that the maps match up, establishes the K\"unneth formula for tensor products.
\end{proof}

\begin{remark}
\label{rem:natural}
The map $\beta_{A,B}$ constructed in Definition \ref{def:beta} is natural in both variables.
While naturality in the second variable is straightforward, naturality in the first variable can been shown using the pullback construction of Lemma \ref{lem:comparison}. 

\DE{we sketch the proof in the first variable here: given a $\Star$-homomorphism $\varphi\colon A\to A'$, one uses the pullback construction of Lemma \ref{lem:comparison} together with $\beta_{A,B}=\overline{\beta}_{A,B}$ (by \ref{prop:kunnethformula}) to see that
\[
	\begin{array}{rl}
		Tor(\varphi)\circ\beta_{A,B} &  = Tor(\varphi)\circ\overline{\beta}_{A,B} \\
		& =(q'_*\otimes\id)\circ\overline{\beta}_{A,B} \\
		& =(q'_*\otimes\id)\circ\alpha^{-1}_{\overline{I},B}\circ {}_A\partial \\
		& =\alpha^{-1}_{I',B}\circ(q'_*\otimes\id)\circ {}_A\partial \\
		& =\alpha^{-1}_{I',B}\circ {}_A\partial \circ (\varphi\otimes\id)_* \\
		& = \beta'_{A,B} \circ(\varphi\otimes\id)_* .
	\end{array}
\]
}	
\end{remark}

\begin{remark}
\label{rem:right}
Similar to Definition \ref{def:beta}, we could use the right Tor-functor (see Definition \ref{def:tor}) and define 
    	$$
		\beta_{A,B}^R \colon K_*(A \otimes B) \to \RTor(K_*(A),K_*(B))
	$$
by $x \mapsto \alpha_{A,J}^{-1} \circ \partial_B(x)$, whenever $B$ is a separable  $\cstar$-algebra satisfying the $\T{UCT}$. 
Then, for any $\partial$-free geometric resolution of the second variable
\[
	\xymatrix{
		0 \ar[r] & J \ar[r]^\iota & F \ar[r]^\pi & B \ar[r]  & 0,
	}
\]
consisting of separable  $\cstar$-algebras satisfying the $\T{UCT}$, the map $\beta_{A,B}^R$ makes the diagram
\[
    	\xymatrix{
		K_*(A \otimes B) \ar@{..>}[r]^-{\beta_{A,B}^R} \ar[d]_{ \partial_B}  & \RTor(K_*(A),K_*(B)) \ar[d]^\subseteq \\
		K_*(A \otimes J) \ar[d]_{(\id \otimes \iota)_*} & K_*(A) \otimes K_*(J) \ar[d]^{\id \otimes \iota_*} \ar[l]^{\alpha_{A,J}}_\cong \\
		K_*(A \otimes F) & K_*(A) \otimes K_*(F) \ar[l]^{\alpha_{A,F}}_\cong
	}
\]
commute and yields the K\"unneth formula for tensor products just as in Proposition \ref{prop:kunnethformula}.
Again, $\beta_{A,B}^R$ is natural in both variables and coincides with $\beta_{A,B}$ (from Definition \ref{def:beta})	 up to natural isomorphism.
\end{remark}

\section{The flip map and the K\"unneth formula}

\begin{lemma}
	\label{lem:sum}
	Consider two $\partial$-free geometric resolutions
	\[
		\xymatrix@R-1.5em{
			0 \ar[r] & I \ar[r]^-{\iota_A} & E \ar[r]^-{\pi_A} & A \ar[r] & 0 , \\ 
			0 \ar[r] & J \ar[r]^-{\iota_B} & F \ar[r]^-{\pi_B} & B \ar[r] & 0 
		} 
	\]
	consisting of separable $\cstar$-algebras satisfying the $\T{UCT}$. Then, the K-theory six-term exact sequences associated to
	\[
		\xymatrix@R-1.5em{
			0 \ar[r] & I \otimes F \ar[r]^-{\iota_1} & I \otimes F + E \otimes J \ar[r] & A \otimes J \ar[r] & 0,   \\ 
			0 \ar[r] & E \otimes J \ar[r]^-{\iota_2} & I \otimes F + E \otimes J \ar[r] & I \otimes B \ar[r] & 0 
		}
	\]
	degenerate into two short exact sequences
	\[
		\xymatrix@R-1.5em{
			0 \ar[r] & K_*(I \otimes F) \ar[r] & K_*(I \otimes F + E \otimes J) \ar[r] & K_*(A \otimes J) \ar[r]  &0, \\
				0 \ar[r] & K_*(E \otimes J) \ar[r] & K_*(I \otimes F + E \otimes J) \ar[r] & K_*(I \otimes B) \ar[r]  &0.
		}
	\]
\end{lemma}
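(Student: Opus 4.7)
The plan is to show that both K-theory six-term sequences degenerate by checking that their connecting maps vanish. The main idea is to compare each extension with a ``reference'' extension obtained by tensoring one of the $\partial$-free geometric resolutions with the middle term of the other, and then to invoke Proposition \ref{prop:kunnethformula} to see that the reference boundary map vanishes.

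For the first sequence, the reference will be
\[
0 \to I \otimes F \to E \otimes F \to A \otimes F \to 0,
\]
obtained by tensoring the resolution for $A$ with $F$; it is exact by semisplitness. The inclusion of ideals $I \otimes F + E \otimes J \subseteq E \otimes F$, together with the identity on $I \otimes F$ and the canonical inclusion $\id_A \otimes \iota_B \colon A \otimes J \hookrightarrow A \otimes F$ on the quotients, will assemble into a morphism of short exact sequences. By naturality of connecting maps, the boundary of the first sequence will factor through ${}_A\partial \colon K_*(A \otimes F) \to K_{*-1}(I \otimes F)$. I would then apply Proposition \ref{prop:kunnethformula} to the resolution $0 \to I \to E \to A \to 0$ with second variable $F$: since $K_*(F)$ is free (as $F$ is the middle term of a $\partial$-free geometric resolution), $\Tor(K_*(A), K_*(F)) = 0$, so $\beta_{A,F} = \alpha_{I,F}^{-1} \circ {}_A\partial$ is zero; since $\alpha_{I,F}$ is an isomorphism (both $I$ and $F$ satisfy the UCT and have free K-groups), this forces ${}_A\partial = 0$, whence the first six-term sequence degenerates.

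For the second sequence I would run the symmetric argument, comparing with the reference
\[
0 \to E \otimes J \to E \otimes F \to E \otimes B \to 0
\]
via the inclusion $\iota_A \otimes \id_B \colon I \otimes B \hookrightarrow E \otimes B$. Here Remark \ref{rem:right} (the right-Tor analogue of Proposition \ref{prop:kunnethformula}) applied to the resolution $0 \to J \to F \to B \to 0$ with first variable $E$ gives, using freeness of $K_*(E)$ and hence vanishing of $\RTor(K_*(E), K_*(B))$, that $\beta^R_{E,B} = \alpha_{E,J}^{-1} \circ \partial_B = 0$, so $\partial_B = 0$; naturality then kills the boundary of the second sequence as well.

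The main technical step will be checking that the two comparison diagrams really are morphisms of short exact sequences at the $\cstar$-algebra level. This amounts to the identity $(I \otimes F) \cap (E \otimes J) = I \otimes J$ inside $E \otimes F$ (which follows from semisplitness and exactness of the minimal tensor product on the relevant ideals), together with a direct calculation that the quotient maps $I \otimes F + E \otimes J \to A \otimes J$ and $I \otimes F + E \otimes J \to I \otimes B$ coincide with the restrictions of $E \otimes F \to A \otimes F$ and $E \otimes F \to E \otimes B$, respectively. Once this is in place, the remainder is a direct application of the machinery developed in Sections \ref{sec:geores} and \ref{sec:KT}.
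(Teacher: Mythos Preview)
Your proposal is correct and follows essentially the same approach as the paper: both construct the morphism of short exact sequences from the extension in question to the reference extension $0 \to I \otimes F \to E \otimes F \to A \otimes F \to 0$ and use naturality of the boundary map together with freeness of $K_*(F)$ to conclude. The only cosmetic difference is that the paper phrases the vanishing of the reference boundary as injectivity of $K_*(I\otimes F)\to K_*(E\otimes F)$ (via $\alpha_{I,F},\alpha_{E,F}$ being isomorphisms and $\iota_{A,*}\otimes\id$ being injective on free groups), whereas you deduce ${}_A\partial=0$ from $\beta_{A,F}=0$ via Proposition~\ref{prop:kunnethformula}; these are equivalent via the six-term sequence, and your symmetric treatment of the second extension via Remark~\ref{rem:right} is likewise what the paper leaves implicit.
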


\begin{proof}
	We consider the following commutative diagram: 
	
	\begin{equation}	\label{eq sum}
		\begin{gathered}		
		\xymatrix{
			0 \ar[r] & I \otimes F \ar@{=}[d] \ar[r]^-{\iota_1} & I \otimes F + E \otimes J \ar[d]^{\iota} \ar[r] & A \otimes J \ar[d]^-{\id \otimes \iota_B} \ar[r] & 0  \\
			0 \ar[r] & I \otimes F \ar[r] & E \otimes F \ar[r]^-{\pi_A \otimes \id} & A \otimes F \ar[r] & 0
		}
		\end{gathered}
		\end{equation}
		
	The bottom row  of (\ref{eq sum}) induces the following six-term sequence
	\[
		\xymatrix{
			K_0(I \otimes F) \ar[r] & K_0(E \otimes F) \ar[r] & K_0(A \otimes F) \ar[d]^0 \\
			K_1(A \otimes F) \ar[u]^0 & K_1(E \otimes F) \ar[l] & K_1(I \otimes F) \ar[l]
		}
	\]
	Indeed, by using naturality of the K\"unneth formula and using the fact that $K_*(F)$ is free, one sees that the maps $K_i(I \otimes F) \to K_i(E \otimes F)$ are injective. Now, by naturality of the boundary maps in the six-term sequence we see that the following diagram commutes:
	\[
		\xymatrix{
			K_{*+1}(A \otimes J) \ar[r]^-{(\id \otimes \iota_B)_*} \ar[d]^\partial & K_{*+1}(A \otimes F) \ar[d]^0 \\
			K_*(I \otimes F) \ar@{=}[r] & K_*(I \otimes F)
		}
	\]
	It follows that the boundary maps associated to the top row  of (\ref{eq sum}) are zero.
\end{proof}

\begin{lemma}
	\label{lem:alg}
	Consider two $\partial$-free geometric resolutions 
	\begin{align*}
		\xymatrix@R-1.5em{
			0 \ar[r] & I \ar[r]^-{\iota_A} & E \ar[r]^-{\pi_A} & A \ar[r] & 0 , \\ 
			0 \ar[r] & J \ar[r]^-{\iota_B} & F \ar[r]^-{\pi_B} & B \ar[r] & 0 
		} 
	\end{align*}
	consisting of separable $\cstar$-algebras satisfying the $\T{UCT}$.  This produces the following commutative diagram with exact rows and columns:
	\[
		\xymatrix{
			& 0 \ar[d] & 0 \ar[d] & K_*(A \otimes B) \ar[d]^-{{}_A \partial} \ar@/_4.2em/@{-->}[dddll]_-{\theta_B} & \\
			0 \ar[r] & K_*(I \otimes J) \ar[r] \ar[d] & K_*(I \otimes F) \ar[r]^-{(\id \otimes \pi_B)_*} \ar[d]^-{(\iota_A \otimes \id)_*} & K_*(I \otimes B) \ar[r] \ar[d] & 0 \\
			0 \ar[r] & K_*(E \otimes J) \ar[r]^-{(\id \otimes \iota_B)_*} \ar[d]_-{(\pi_A \otimes \id)_*} & K_*(E \otimes F) \ar[r] \ar[d] & K_*(E \otimes B) \ar[r] \ar[d] & 0 \\
		K_*(A \otimes B) \ar[r]^-{\partial_B} &	K_*(A \otimes J) \ar[r] \ar[d] & K_*(A \otimes F) \ar[r] \ar[d] & K_*(A \otimes B) \\
		& 	0 & 0 
		}
	\]
	Then, the odd homomorphism
	$$
		\theta_B \colon K_*(A \otimes B) \to K_*(A \otimes J),
	$$ 
	induced by this diagram (as in \cite[Section 3]{aifK}), satisfies
	$$
		\theta_B = - \partial_B.
	$$
\end{lemma}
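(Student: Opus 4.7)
The plan is to realize both $\theta_B$ and $\partial_B$ as projections of a single boundary map associated to the semisplit extension
\[
0 \to I\otimes F + E\otimes J \to E\otimes F \xrightarrow{\pi_A\otimes\pi_B} A\otimes B \to 0,
\]
with c.p.c.\ splitting $s_A\otimes s_B$ (for c.p.c.\ splits $s_A,s_B$ of $\pi_A,\pi_B$). Write $\delta\colon K_*(A\otimes B)\to K_{*+1}(I\otimes F + E\otimes J)$ for its odd boundary and $\iota_s$ for the inclusion into $E\otimes F$. By Lemma \ref{lem:sum}, $K_{*+1}(I\otimes F + E\otimes J)$ fits into two short exact sequences with surjections $q_a$ onto $K_{*+1}(A\otimes J)$ (kernel $K_{*+1}(I\otimes F)$, inclusion $j_1$) and $q_b$ onto $K_{*+1}(I\otimes B)$ (kernel $K_{*+1}(E\otimes J)$, inclusion $j_2$). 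A direct check on elementary tensors shows that the underlying algebra quotient maps fit into morphisms of extensions landing in $0\to A\otimes J\to A\otimes F\to A\otimes B\to 0$ (middle vertical $\pi_A\otimes\id$) and $0\to I\otimes B\to E\otimes B\to A\otimes B\to 0$ (middle vertical $\id\otimes\pi_B$), so naturality of the boundary map gives
\[
q_a\circ\delta = \partial_B \qquad\text{and}\qquad q_b\circ\delta = {}_A\partial.
\]

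Now fix $x\in K_*(A\otimes B)$ and unwind the definition of $\theta_B(x)$ from the diagram chase: choose a lift $z\in K_{*+1}(I\otimes F)$ of ${}_A\partial(x)$ along $(\id\otimes\pi_B)_*$, set $w := (\iota_A\otimes\id)_*(z)$, find the unique $v'\in K_{*+1}(E\otimes J)$ with $(\id\otimes\iota_B)_*(v') = w$, and put $\theta_B(x) = (\pi_A\otimes\id)_*(v')$. Since $q_b\circ (j_1)_* = (\id\otimes\pi_B)_*$, both $(j_1)_*(z)$ and $\delta(x)$ are $q_b$-preimages of ${}_A\partial(x)$, so their difference lies in $\ker q_b = \operatorname{im}(j_2)_*$, producing a unique $v\in K_{*+1}(E\otimes J)$ with
\[
\delta(x) - (j_1)_*(z) = (j_2)_*(v).
\]

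I would then push this identity forward in two ways. Applying $(\iota_s)_*$, using $(\iota_s)_*\circ\delta = 0$ (boundary followed by inclusion vanishes in the six-term sequence) together with $\iota_s\circ j_1 = \iota_A\otimes\id$ and $\iota_s\circ j_2 = \id\otimes\iota_B$, yields
\[
-(\iota_A\otimes\id)_*(z) = (\id\otimes\iota_B)_*(v);
\]
injectivity of $(\id\otimes\iota_B)_*$ (from the short exact $E$-column) then forces $v = -v'$. Applying $q_a$ instead, using $q_a\circ (j_1)_* = 0$ and $q_a\circ (j_2)_* = (\pi_A\otimes\id)_*$, gives
\[
\partial_B(x) = q_a(\delta(x)) = (\pi_A\otimes\id)_*(v) = -(\pi_A\otimes\id)_*(v') = -\theta_B(x),
\]
so $\theta_B = -\partial_B$.

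The main anticipated obstacle is the bookkeeping needed to justify $q_a\circ\delta = \partial_B$ and $q_b\circ\delta = {}_A\partial$ via naturality of the boundary — each amounts to writing out one commutative diagram of extensions and identifying the resulting quotient morphism with the one described in Lemma \ref{lem:sum}. Once those identifications are in hand, the minus sign emerges cleanly from the single step where $\delta$ is killed by $\iota_s$.
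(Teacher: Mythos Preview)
Your argument is correct and follows essentially the same route as the paper: both use the boundary map of the semisplit extension $0\to I\otimes F+E\otimes J\to E\otimes F\to A\otimes B\to 0$, decompose its image via the two short exact sequences of Lemma~\ref{lem:sum}, and extract the minus sign from the vanishing of $(\iota_s)_*\circ\delta$. Your notation and the explicit separation of $v$ and $v'$ make the final step slightly more transparent than the paper's version, but the underlying computation is identical.
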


\begin{proof}
	We first recall the construction of  $\theta_B$. Fix $x \in K_{i+1}(A \otimes B)$ and find some lift $a \in K_i(I \otimes F)$ with $(\id \otimes \pi_B)_i(a) = {}_A \partial(x)$. By exactness of the third row at $K_*(E \otimes F)$ we can find (a unique) $b \in K_i(E \otimes J)$ with $(\id \otimes \iota_B)_i(b) = (\iota_A \otimes \id)_i(a)$. Then one defines $\theta_B(x) = (\pi_A \otimes \id)_i(b)$. One can check that the outcome does not depend on the choice of $a$ and that $\theta_B$ is a group homomorphism.
	\par To prove the lemma we consider the following commutative diagram:
	\[
		\xymatrix{
			K_*(A \otimes B) \ar[r]^-{\partial_B} \ar@{=}[d] & K_*(A \otimes J) & K_*(E \otimes J)\ar@{^{(}-->}[dl]_-{(\iota_2)_*} \ar@{->>}[l]_-{(\pi_A \otimes \id)_*} \ar[d]^-{(\id \otimes \iota_B)_*} \\
			K_*(A \otimes B) \ar@{=}[d] \ar[r]^-{\partial} & K_*(I \otimes F + E \otimes J) \ar@{..>>}[u] \ar@{-->>}[d] \ar[r]^-{\iota_*} & K_*(E \otimes F)  \\
			K_*(A \otimes B) \ar[r]_-{{}_A \partial} & K_*( I \otimes B) & K_*(I \otimes F) \ar@{->>}[l]^-{(\id \otimes \pi_B)_*} \ar@{_{(}..>}[ul]^-{(\iota_1)_*} \ar[u]_-{(\iota_A \otimes \id)_*}
		}
	\]
	(The dotted and dashed arrows indicate exact sequences in the diagram, as we shall explain.)
	The middle row in this diagram is induced by the six-term exact sequence associated to 
	\[
		\xymatrix{
			0 \ar[r] &   I \otimes F + E \otimes J \ar[r]^-{\iota} &  E \otimes F \ar[r]^-{\pi_A \otimes \pi_B} &  A \otimes B \ar[r] &  0
		} .
	\]
	The dotted and dashed ways come from the six-term exact sequence associated to the following short exact sequences (respectively)
	\begin{align*}
		\xymatrix{
			0 \ar[r] & I \otimes F \ar[r]^-{\iota_1} & I \otimes F + E \otimes J \ar[r] & A \otimes J \ar[r]  & 0
		},
		\\
		\xymatrix{
			 0 \ar[r] & E \otimes J \ar[r]^-{\iota_2} & I \otimes F + E \otimes J \ar[r] & I \otimes B \ar[r] & 0 .
		}
	\end{align*}
	As shown in Lemma \ref{lem:sum}, their associated six-term sequences degenerate to two short exact sequences. The left part of the diagram is induced by naturality of boundary maps, i.e., by considering the following maps of short exact sequences:
	\[
		\xymatrix{
			0 \ar[r] & A \otimes J \ar[r] & A \otimes F \ar[r] & A \otimes B \ar[r] & 0 \\
			0 \ar[r] & I \otimes F + E \otimes J \ar[u]  \ar[d] \ar[r]^-\iota & E \otimes F  \ar[d] \ar[u] \ar[r] & A \otimes B \ar[r] \ar@{=}[u] \ar@{=}[d] & 0 \\
			0 \ar[r] & I \otimes B \ar[r] & E \otimes B \ar[r] & A \otimes B \ar[r] & 0 
		}
	\]
	\par 
	Now, fix  $x \in K_*(A \otimes B)$ and define $z \coloneqq \partial (x) \in K_*( I \otimes F + E \otimes J)$. Choose $a \in K_*(I \otimes F)$ such that $(\id \otimes \pi_B)_*(a) =  {}_A \partial(x)$. We see that  $z-(\iota_1)_*(a)$ gets killed by the map $K_*(I \otimes F + E \otimes J) \to K_*(I \otimes B)$. By exactness of the dashed way, we see that  $z-(\iota_1)_*(a) \in K_*(E \otimes J)$. Hence, there exists some $b \in K_*(E \otimes J)$ with $z = (\iota_1)_*(a)+(\iota_2)_*(b)$.  It follows that  
	\begin{align*}
		 0 & = \iota_*(\partial(x))  = \iota_*(z) = \iota_*((\iota_1)_*(a)+(\iota_2)_*(b)) \\
		 & = (\iota_A \otimes \id)_*(a) + (\id \otimes \iota_B)_*(b).
	\end{align*} 
	Since now  $(\iota_A \otimes \id)_*(a) =  (\id \otimes \iota_B)_*(-b)$, we get by definition of $\theta_B$  that $\theta_B(x) = (\pi_A \otimes \id)_*(-b) = - \partial_B(x)$.
\end{proof}

\section{Proof of Lemma \ref{lem:KunnethFlipCorrected}}
\label{sec:proof-KT-Flip-corrected}

\begin{proof}[Proof that diagram \eqref{eq:KunnethFlipDiag1} commutes] By naturality of the maps involved and by unitizing if necessary, we may assume that $A$ and $B$ are both unital, so that $K_0(A)$ and $K_0(B)$ are generated by classes of projections in matrix algebras over $A$ and $B$ respectively.
Let $p$ and $q$ be  projections in some matrix algebra over $A$ resp. $B$ and fix unitaries $u$ and $v$ in some matrix algebra over $A$ resp.  $B$. Then, the following holds (where $1$ denotes the identity in the respective matrix algebras):
\[
	\begin{array}{l}
		\alpha_{A,B}([p]_0 \otimes [q]_0) = [p \otimes q]_0, \\
		\alpha_{A,B}([p]_0 \otimes [v]_1) = [p \otimes v + (1-p) \otimes 1]_1, \\
		\alpha_{A,B}([u]_1 \otimes [q]_0) = [u \otimes q + 1 \otimes (1-q)]_1.
	\end{array}
\]
Having these formulas, one checks by hand that they interact with the flip as claimed in Lemma \ref{lem:KunnethFlipCorrected}. For the remaining summand we use that $\alpha$ is natural. Let $\phi_A \colon C(\mathbb T) \to A : z \mapsto u$ and $\phi_B \colon C(\mathbb T) \to B :  z \mapsto v$, where $z$ is the canonical generator of $C(\mathbb T)$. Let us consider the following diagram
\[		
	\resizebox{\textwidth}{!}{
	\xymatrix{
	& K_1(C(\mathbb T)) \otimes K_1(C(\mathbb T))  \ar[dd]|-(.2){-\sigma_{K_1(C(\mathbb T)),K_1(C(\mathbb T))}} \ar[dl]_-{(\phi_A)_1 \otimes (\phi_B)_1}  \ar[rr]^(.4){\alpha_{C(\mathbb T),C(\mathbb T)}}  & & K_0(C(\mathbb T) \otimes C(\mathbb T))  \ar[dl]_-{(\phi_A \otimes \phi_B)_1} \ar[dd]|-(.2){(\sigma_{C(\mathbb T),C(\mathbb T)})_0} \\
	K_1(A) \otimes K_1(B) \ar[rr]_(.4){\alpha_{A,B}} \ar[dd]|-(.2){-\sigma_{K_1(A),K_1(B)}}  & & K_0(A \otimes B) \ar[dd]|-(.2){(\sigma_{A,B})_0} \\
	& K_1(C(\mathbb T)) \otimes K_1(C(\mathbb T)) \ar[dl]^{(\phi_B)_1 \otimes (\phi_A)_1} \ar[rr]_(.4){\alpha_{C(\mathbb T),C(\mathbb T)}} & & K_0(C(\mathbb T) \otimes C(\mathbb T)) \ar[dl]^{(\phi_B \otimes \phi_A)_0} \\
	K_1(B) \otimes K_1(A) \ar[rr]_(.4){\alpha_{B,A}} & & K_0(B \otimes A)
	}
	}
\]
The top and bottom faces commute by naturality of $\alpha$. The back face commutes by Example \ref{ex:bott-element}. The left and right faces commute by definition of the flip map. A diagram chase now shows that 
$$
	(\sigma_{A,B})_0(\alpha_{A,B}([u]_1 \otimes [v]_1)) = - \alpha_{B,A}([v]_1 \otimes [u]_1).
$$
\end{proof}

\begin{proof}[Proof that diagram \eqref{eq:KunnethFlipDiag2} commutes]
Fix $\partial$-free geometric resolutions for $A$ and $B$:
	\begin{align*}
		\xymatrix{
			0 \ar[r] & I \ar[r]^-{\iota_A} & E \ar[r]^-{\pi_A} & A \ar[r] & 0 ,
		}  \\ 
		\xymatrix{
			0 \ar[r] & J \ar[r]^-{\iota_B} & F \ar[r]^-{\pi_B} & B \ar[r] & 0 ,
		} 
	\end{align*}
	consisting of separable $\cstar$-algebras satisfying the UCT. For the definition of the algebraic isomorphism
	 $$
	 	\eta_{K_i(A),K_j(B)} \colon \Tor(K_i(A),K_j(B)) \to \Tor(K_j(B),K_i(A)),
	 $$ one considers the following commutative diagram and performs a diagram chase:
	 
	 \begin{landscape}
	 \[
	\resizebox{1.7\textwidth}{!}{
	 	\xymatrix{
	 		& 0 \ar[dd] & & 0  \ar[dd]  & & K_{i+j+1}(A \otimes B) \ar@/_9em/@{->}[ddddddllll]^-{\theta_B} \ar[dd]^-{{}_A \partial} \ar[dr]^-{\beta_{A,B}} & \\ 
	 	  	& & & & & &  \ar@/_7em/@{-->}[ddddddddllllll]^-{\eta}\Tor(K_i(A),K_j(B)) \ar@{-->}[dd]  \\
	 		0 \ar[r] & K_{i+j}(I \otimes J) \ar[dd] \ar[rr] & & K_{i+j}(I \otimes F) \ar[dd] \ar[rr] & & K_{i+j}(I \otimes B) \ar[dd] \\
	 		& & K_i(I) \otimes K_j(J) \ar@{-->}[ul]^-{\alpha}_-\cong \ar@{-->}[rr] \ar@{-->}[dd] & & K_i(I) \otimes K_j(F) \ar@{-->}[ul]^-\alpha_-\cong \ar@{-->}[rr] \ar@{-->}[dd] & & K_i(I) \otimes K_j(B) \ar@{-->}[ul]^-\alpha_-\cong \ar@{-->}[dd] \\
	 		0 \ar[r]	& K_{i+j}(E \otimes J) \ar[rr] \ar[dd] & & K_{i+j}(E \otimes F)  \ar[rr] \ar[dd]  &  & K_{i+j}(E \otimes B) \ar[dd]  \\
	 		& & K_i(E) \otimes K_j(J) \ar@{-->}[dd] \ar@{-->}[rr] \ar@{-->}[ul]^-{\alpha}_-\cong & & K_i(E) \otimes K_j(F) \ar@{-->}[rr] \ar@{-->}[dd]  \ar@{-->}[ul]^-{\alpha}_-\cong & & K_i(E) \otimes K_j(B)  \ar@{-->}[dd] \ar@{-->}[ul]^-{\alpha}_-\cong \\
	 	K_{i+j+1}(A \otimes B) \ar[r]^-{\partial_B}	& K_{i+j}(A \otimes J) \ar[dd]  \ar[rr] && K_{i+j}(A \otimes F) \ar[dd] \ar[rr] & & K_{i+j}(A \otimes B) \\
	 		& & K_i(A) \otimes K_j(J)  \ar@{-->}[rr] \ar@{-->}[dd]^-\sigma \ar@{-->}[ul]^-{\alpha}_-\cong && K_i(A) \otimes K_j(F) \ar@{-->}[dd]^-\sigma  \ar@{-->}[rr] \ar@{-->}[ul]^-{\alpha}_-\cong && K_i(A) \otimes K_j(B) \ar@{-->}[ul]^-{\alpha} \ar@{-->}[dd]^-\sigma \\ 
	 		&  0 & & 0 
	 		\\ 
	 	\Tor(K_j(B),K_i(A)) \ar@{-->}[rr]	& & K_j(J) \otimes K_i(A)  \ar@{-->}[rr] & & K_j(F) \otimes K_i(A) \ar@{-->}[rr] & & K_j(B) \otimes K_i(A) 
	 	}
	 }
	\]
	
	\end{landscape}
	 	 
	The front (dashed) layer is induced by forming the double complex associated to the projective resolutions of $K_i(A)$ and $K_j(B)$ coming from our $\partial$-free geometric resolutions. By  naturality of $\alpha$ one arrives at the (back) solid layer, which is the one described in Lemma \ref{lem:alg}. We start with $x \in K_{i+j+1}(A \otimes B)$ and remember that 
	$$
		\beta_{A,B} \colon K_{i+j+1}(A \otimes B) \to \Tor(K_i(A),K_j(B))
	$$
	is the map $\beta_{A,B}$ from Definition \ref{def:beta} (followed by the coordinate projection onto $\Tor(K_i(A),K_j(B))$). Then $\eta_{K_i(A),K_j(B)}(\beta_{A,B}(x))$ is computed by performing a diagram chase in the front (dashed) layer. By commutativity of the above diagram, we see that 
	 $$
	 	\eta_{K_i(A),K_j(B)}(\beta_{A,B}(x)) = \sigma_{K_i(A),K_j(J)} \circ \alpha_{A,J}^{-1} \circ \theta_B(x),
	 $$
	 where $\theta_B$ is as in Lemma \ref{lem:alg}. By the same lemma we know that $\theta_B(x) = -\partial_B(x)$. We thus get 
	 \[
	 	\eta_{K_i(A),K_j(B)}(\beta_{A,B}(x)) = -  \sigma_{K_i(A),K_j(J)} \circ \alpha_{A,J}^{-1} \circ \partial_B(x).
	 \]
	Consider the following commuting diagram:
	
	\[
		\xymatrix{
			K_{i+j+1}(A \otimes B) \ar[r]^{\partial_B} \ar[d]_{K_{i+j+1}(\sigma_{A,B})} & K_{i+j}(A \otimes J) \\
			K_{i+j+1}(B \otimes A) \ar[r]^{{}_B \partial} \ar[d]_{\beta_{B,A}} & K_{i+j}(J \otimes A) \ar[u]_{K_{i+j}(\sigma_{J,A})}  \\
			\Tor(K_j(B),K_i(A)) \ar[r]^-{\subseteq} & K_j(J) \otimes K_i(A) \ar[u]_{\alpha_{J,A}} 
		}
	\] 
	By walking along the outer square we may replace $\partial_B$ and arrive at the following:
	\begin{eqnarray*}
		&& \hspace*{-3em} \eta_{K_i(A),K_j(B)} \circ \beta_{A,B}(x) \\ 
		&=& - \sigma_{K_i(A),K_j(J)} \circ \alpha_{A,J}^{-1} \circ K_{i+j}(\sigma_{J,A}) \circ \alpha_{J,A}  \circ \beta_{B,A} \circ K_{i+j+1}(\sigma_{A,B})(x) \\
		&\stackrel{\eqref{eq:KunnethFlipDiag1}}=& -(-1)^{ij}\beta_{B,A}\circ K_{i+j+1}(\sigma_{A,B})(x),
	\end{eqnarray*}
	as required.
\end{proof}

\begin{remark}
    The conclusion of Lemma \ref{lem:KunnethFlipCorrected} remains the same if we consider $\beta_{A,B}^R$ instead. Indeed, the maps $\beta_{A,B}^L$ and $\beta_{A,B}^R$ are related by a minus sign:
    \[
        \xymatrix{
            K_{i+j+1}(A \otimes B) \ar[r]^-{\beta_{A,B}^L} \ar[d]^{-1} & \LTor(K_i(A),K_j(B)) \ar[d]^\cong  \\
            K_{i+j+1}(A \otimes B) \ar[r]_-{\beta_{A,B}^R} & \RTor(K_i(A),K_j(B))
        }
    \]
    The vertical isomorphism is the algebraic one induced by diagram chasing. Then the following diagram commutes: 
    \[
        \xymatrix{
             K_*(A \otimes B)  \ar@/_7em/[ddd]_{(\sigma_{A,B})_*} \ar[r]^-{\beta_{A,B}^R} \ar[d]^{-1} & \RTor(K_*(A),K_*(B)) \ar[d]^\cong \ar@/^7em/[ddd]^{\eta_R}  \\
            K_*(A \otimes B) \ar[r]_-{\beta_{A,B}^L} \ar[d]_{(\sigma_{A,B})_*} & \LTor(K_*(A),K_*(B)) \ar[d]^{ \eta} \\
             K_*(B \otimes A) \ar[r]^-{\beta_{B,A}^L} \ar[d]^{-1} & \LTor(K_*(B),K_*(A)) \ar[d]^\cong  \\
            K_*(B \otimes A) \ar[r]_-{\beta_{B,A}^R} & \RTor(K_*(B),K_*(A))
        }
    \]
\end{remark}

\section{Proof of Theorem \ref{thm:MainThmCorrected}}
\label{sec:MainThmCorrected}

\begin{proof}[{Proof of Theorem \ref{thm:KKsuff}}]
In all five cases, we see first using the K\"unneth formula that
	\begin{align*} 	
		\alpha_{A,A} \colon &K_0(A)\otimes K_0(A) \to K_0(A\otimes A) \quad \text{and} \\
			 \beta_{A,A} \colon & K_1(A\otimes A) \to \Tor(K_1(A),K_1(A)) 
		\end{align*}
are isomorphisms.
Second, we see that both
	\begin{align*}
		\sigma_{K_0(A),K_0(A)} \colon &K_0(A)\otimes K_0(A) \to K_0(A)\otimes K_0(A)
\quad \text{and}\quad  \\
\eta_{K_1(A),K_1(A)}  \colon &K_1(A)\otimes K_1(A) \to K_1(A)\otimes K_1(A)
\end{align*}
are the respective identity maps (this uses \cite[Proposition 5.1]{aifK}).

Thus by Lemma \ref{lem:KunnethFlipCorrected},
\begin{align*}
K_0(\sigma_{A,A}) = (-1)^{0\cdot 0}\alpha_{A,A}\circ \sigma_{K_0(A),K_0(A)}\circ \alpha_{A,A}^{-1} = \id_{K_0(A\otimes A)}
\end{align*}
and
\begin{align*}
K_1(\sigma_{A,A}) = (-1)^{1+1\cdot 1}\beta_{A,A}^{-1}\circ \eta_{K_1(A),K_1(A)}\circ \beta_{A,A} = \id_{K_1(A\otimes A)}.
\end{align*}

Next (as in the proof of \cite[Theorem 5.2]{aifK})), we have 
$$
	\mathrm{Ext}_1^{\Z}(K_i(A\otimes A),K_{1-i}(A\otimes A))=0 \qquad (i=0,1),
$$ by using \cite[Lemma 1.1]{aifK} in case (ii) \AS{This was case (v) before}. So by the UCT, it follows that $\sigma_{A,A}$ agrees with $\id_{A\otimes A}$ in KK.
\end{proof}

\begin{proof}[{Proof of Corollary \ref{cor:Suff}}]
The proof of \cite[Corollary 5.3]{aifK} goes through unchanged, except using Theorem \ref{thm:KKsuff} in place of \cite[Theorem 5.2]{aifK}.
\end{proof}

To prove Theorem \ref{thm:NecessaryCorrected}, we follow roughly the same argument as in \cite[Section 6]{aifK}, but need to work with the grading.
Thus we will prove variants of the intermediary lemmas from \cite[Section 6]{aifK}, where the main change is to work with direct summands of $K_i(A)$ rather than of $K_*(A)=K_0(A)\oplus K_1(A)$.

\begin{lemma}[{cf.\ \cite[Lemma 6.2]{aifK}}]
\label{lem:BasicRestrictions}
Let $A$ be a separable $\cstar$-algebra in the $\T{UCT}$ class with approximately inner flip.
Suppose that $G_i$ is a direct summand of $K_0(A)$ or $K_1(A)$, for $i=1,2$, with $G_1\cap G_2 = \{0\}$.
Then:
\begin{enumerate}
\item $G_1 \otimes G_2 = 0$; and
\item $\Tor(G_1,G_2)=0$.
\end{enumerate}
\end{lemma}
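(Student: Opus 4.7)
The plan is to extract from the approximately inner flip hypothesis, via the corrected K\"unneth flip formula (Lemma \ref{lem:KunnethFlipCorrected}), strong algebraic vanishings for the K-theory of $A$, and then to combine these with the summand structure of $G_1,G_2$ and a standard $\Tor$ long exact sequence to conclude.

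The first step is to feed the identity $K_*(\sigma_{A,A}) = \id$ (which holds by approximate innerness) into diagram \eqref{eq:KunnethFlipDiag1}. Using the injectivity of $\alpha_{A,A}$ from Proposition \ref{prop:kunnethformula}, any $z \in K_i(A)\otimes K_j(A)$ satisfies $(-1)^{ij}\sigma(z) = z$ in $K_*(A)\otimes K_*(A)$. The key observation is that when $i \neq j$, this equality lives across distinct summands of $K_*(A)\otimes K_*(A) = \bigoplus_{i,j}K_i(A)\otimes K_j(A)$, which forces $K_i(A)\otimes K_j(A) = 0$; when $i=j$, it says $\sigma = (-1)^i\id$ on $K_i(A)\otimes K_i(A)$. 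The parallel analysis of \eqref{eq:KunnethFlipDiag2}, combined with the surjectivity of the K\"unneth projection onto each $\Tor$-summand, yields $\Tor(K_i(A), K_j(A)) = 0$ for $i\neq j$ and $\eta = (-1)^{1+i}\id$ on $\Tor(K_i(A), K_i(A))$. Applied now to any decomposition $K_i(A) = G\oplus H$ of a summand and its complement, the same ``different summand'' argument on $K_i\otimes K_i$ yields the cross-vanishings $G\otimes H = 0$ and $\Tor(G,H) = 0$.

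With these vanishings in hand, the cross-grading case of the lemma (where $G_1\subseteq K_i(A)$ and $G_2\subseteq K_j(A)$ with $i\neq j$) is immediate: both $G_1\otimes G_2$ and $\Tor(G_1,G_2)$ embed (as direct summands) into the zero groups $K_i(A)\otimes K_j(A)$ and $\Tor(K_i(A), K_j(A))$. For the same-grading case with $G_1, G_2 \subseteq K_i(A)$ and $G_1\cap G_2 = 0$, I would pick a complement $K_i(A) = G_2\oplus H_2$ and note that the composition $\psi\colon G_1 \hookrightarrow K_i(A) \twoheadrightarrow H_2$ (projecting along $G_2$) is injective since its kernel is $G_1\cap G_2 = 0$. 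Tensoring the short exact sequence
\[ 0 \to G_1 \xrightarrow{\psi} H_2 \to H_2/\psi(G_1) \to 0 \]
with $G_2$ and invoking $\Tor(H_2, G_2) = 0 = H_2\otimes G_2$ from the previous paragraph, the resulting six-term exact sequence immediately delivers $\Tor(G_1, G_2) = 0$ (which is conclusion (ii)) together with the identification $G_1\otimes G_2 \cong \Tor(L, G_2)$, where $L := H_2/\psi(G_1) \cong K_i(A)/(G_1\oplus G_2)$.

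The main obstacle is then to show this remaining $\Tor(L, G_2)$ vanishes. The symmetric version of the exact-sequence argument above (starting from $K_i(A) = G_1\oplus H_1$ and tensoring $0\to G_2\to H_1\to L\to 0$ with $G_1$) already shows $L\otimes G_1 = L\otimes G_2 = 0$, so $L$ is tensor-orthogonal to both $G_j$. To finish, I plan to leverage the structural consequences of $\sigma = \pm\id$ on $K_i(A)\otimes K_i(A)$: the existence of $\Q$-linearly independent elements $x,y$ would give a nonzero antisymmetric tensor $x\otimes y - y\otimes x$, contradicting $\sigma = \pm\id$, so the torsion-free rank of $K_i(A)$ is at most one, and a similar case analysis of two-generated subgroups forces each $p$-primary part to be cyclic or Pr\"ufer. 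Hence $K_i(A)$ is locally cyclic (a subgroup of $\Q$ or of $\Q/\Z$); two summands with trivial intersection in such a group have disjoint prime supports, their direct sum is again a summand, and the complement $L$ inherits a prime support disjoint from $G_2$, so $\Tor(L, G_2) = 0$ via the primary decomposition of $\Tor$.
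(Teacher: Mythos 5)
Your opening moves coincide with the paper's: from $K_*(\sigma_{A,A})=\id$ and Lemma \ref{lem:KunnethFlipCorrected} you correctly extract that $K_0(A)\otimes K_1(A)=0$ and $\Tor(K_0(A),K_1(A))=0$ (the cross--degree case), and that $\sigma_{K_t(A),K_t(A)}=\pm\id$ and $\eta_{K_t(A),K_t(A)}=\pm\id$ in the equal--degree case; your observation that a summand $G$ and its complement $H$ in $K_t(A)$ must then satisfy $G\otimes H=0$ and $\Tor(G,H)=0$ is also sound. Your proof of conclusion (ii) is correct and is genuinely different from (and cleaner than) the paper's: since $\psi\colon G_1\to H_2$ is injective and $\Tor$ preserves injections, $\Tor(G_1,G_2)$ embeds into $\Tor(H_2,G_2)=0$, with no further input needed. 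The paper instead disposes of both (i) and (ii) in the equal--degree case by citing the explicit computation from the original \cite[Lemma 6.2]{aifK}, which derives a contradiction directly from $\sigma_{K_t(A),K_t(A)}=\pm\id$ (resp.\ $\eta=\pm\id$) without any structure theory for $K_t(A)$.

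For conclusion (i), however, there is a genuine gap. You correctly reduce to showing $\Tor(L,G_2)=0$ where $L=K_t(A)/(G_1\oplus G_2)$, but the structural claim you use to finish --- that $K_t(A)$ is locally cyclic, i.e.\ a subgroup of $\Q$ or of $\Q/\Z$ --- does not follow from the constraints you have established. The implication ``torsion-free rank at most one and cocyclic $p$-primary parts $\Rightarrow$ locally cyclic'' fails for mixed groups: the group $G=\Z[1/2]\oplus\Z/2$ has rank one and cyclic $2$-primary part, satisfies $\sigma_{G,G}=\id$ on $G\otimes G\cong\Z[1/2]\oplus\Z/2$ (the cross terms $\Z[1/2]\otimes\Z/2$ vanish by $2$-divisibility) and $\eta=-\id=\id$ on $\Tor(G,G)\cong\Z/2$, so it passes every test you have derived for $K_0(A)$, yet it is neither a subgroup of $\Q$ nor of $\Q/\Z$. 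Consequently your final ``disjoint prime support'' argument, which presupposes that $K_t(A)$ is either torsion-free of rank one or a subgroup of $\Q/\Z$, does not cover the mixed case, and the proof of $\Tor(L,G_2)=0$ (equivalently $G_1\otimes G_2=0$) is incomplete. To close the gap you would need an argument that works for arbitrary summands of an arbitrary $K_t(A)$ subject only to $\sigma=\pm\id$ and $\eta=\pm\id$ --- which is exactly what the computation in \cite[Lemma 6.2]{aifK}, invoked by the paper at this point, provides.
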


\begin{proof}
Since $\sigma_{A,A}:A\otimes A \to A \otimes A$ is approximately inner, $K_i(\sigma_{A,A})$ must be the identity map on $K_i(A\otimes A)$, for $i=0,1$.
Let $G_i$ be a direct summand of $K_{t_i}(A)$, for $i=1,2$.
If $t_1\neq t_2$, then it follows by Lemma \ref{lem:KunnethFlipCorrected} that $\sigma_{K_{t_1}(A),K_{t_2}(A)}=0$ and $\eta_{K_{t_1}(A),K_{t_2}(A)}=0$, which implies $K_{t_1}(A)\otimes K_{t_2}(A)=0$ and $\Tor(K_{t_1}(A),K_{t_2}(A))=0$.

On the other hand, if $t_1=t_2=t$ then by Lemma \ref{lem:KunnethFlipCorrected}, 
\[ \sigma_{K_{t}(A),K_{t}(A)} = \pm\id_{K_{t}(A)\otimes K_{t}(A)}, \]
whereas the computation in \cite[Lemma 6.2]{aifK} shows that if $G_1\otimes G_2\neq 0$ then $\sigma_{K_t(A),K_t(A)}$ cannot be $\pm\id_{K_t(A)\otimes K_t(A)}$.

Likewise, by Lemma \ref{lem:KunnethFlipCorrected},
\[ \eta_{K_{t}(A),K_{t}(A)} = \pm\id_{\Tor(K_{t}(A),K_{t}(A))}, \]
whereas the computation in \cite[Lemma 6.2]{aifK} shows that if $\Tor(G_1,G_2)\neq 0$ then $\eta_{K_t(A),K_t(A)}$ cannot be $\pm\id_{\Tor(K_t(A),K_t(A))}$.
\end{proof}

The following is the same as \cite[Lemma 6.3]{aifK}. The proof there works using Lemma \ref{lem:BasicRestrictions} in place of \cite[Lemma 6.2]{aifK}.

\begin{lemma}
\label{lem:TensorKComp}
Let $A$ be a separable $\cstar$-algebra in the $\T{UCT}$ class which has approximately inner flip.
Then 
\begin{align*}
K_0(A \otimes A) &\cong \big(K_0(A) \otimes K_0(A)\big) \oplus \big(K_1(A) \otimes K_1(A)\big) \quad \text{and} \\
K_1(A \otimes A) &\cong \Tor(K_0(A),K_0(A)) \oplus \Tor(K_1(A),K_1(A)).
\end{align*}
\end{lemma}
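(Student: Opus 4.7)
The plan is to read off the decomposition directly from the K\"unneth formula (Proposition \ref{prop:kunnethformula}) for the $\cstar$-algebra $A\otimes A$, using Lemma \ref{lem:BasicRestrictions} to annihilate the cross-grading terms.

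Applying Lemma \ref{lem:BasicRestrictions} with $G_1 := K_0(A)$ (viewed as a direct summand of $K_0(A)$) and $G_2 := K_1(A)$ (viewed as a direct summand of $K_1(A)$), one has $G_1 \cap G_2 = \{0\}$ since they live in distinct graded components. Thus $K_0(A) \otimes K_1(A) = 0$ and $\Tor(K_0(A), K_1(A)) = 0$, and by symmetry the same holds with the roles swapped. First I would record these vanishings explicitly, as they are the only algebraic input needed beyond the K\"unneth sequence.

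Next I would write down the two K\"unneth short exact sequences in degrees $0$ and $1$, splitting each $\otimes$-term and each $\Tor$-term according to the degrees of the factors. In degree $0$ the $\alpha$-side contributes $(K_0(A)\otimes K_0(A))\oplus(K_0(A)\otimes K_1(A))\oplus(K_1(A)\otimes K_0(A))\oplus (K_1(A)\otimes K_1(A))$ restricted to the even summands $i+j\equiv 0$, and the $\beta$-side contributes the $\Tor$-terms with $i+j\equiv 1$; the vanishings from the previous paragraph kill all cross-terms and also the entire $\Tor$-side of the degree-$0$ sequence. The degree-$1$ sequence likewise collapses in the opposite way, leaving only the $\Tor$-side surviving. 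The resulting short exact sequences read
\begin{equation*}
0 \to (K_0(A)\otimes K_0(A)) \oplus (K_1(A)\otimes K_1(A)) \to K_0(A\otimes A) \to 0 \to 0
\end{equation*}
and
\begin{equation*}
0 \to 0 \to K_1(A\otimes A) \to \Tor(K_0(A),K_0(A)) \oplus \Tor(K_1(A),K_1(A)) \to 0,
\end{equation*}
which immediately give the claimed isomorphisms (no splitting argument is required, since one of the outer terms in each sequence vanishes).

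There is no real obstacle here beyond correctly tracking the parity bookkeeping in the graded K\"unneth formula; the substantive content is all contained in Lemma \ref{lem:BasicRestrictions}, which has already been proved using the corrected K\"unneth flip formula. In particular, unlike the original \cite[Lemma 6.3]{aifK}, no use is made of $K_*(\sigma_{A,A}) = \id$ beyond what was already invoked in the proof of Lemma \ref{lem:BasicRestrictions}.
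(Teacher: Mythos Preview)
Your proposal is correct and follows exactly the approach the paper indicates: the paper simply states that the proof of \cite[Lemma 6.3]{aifK} goes through with Lemma \ref{lem:BasicRestrictions} in place of \cite[Lemma 6.2]{aifK}, and your write-up is precisely that argument spelled out. The only substantive input is the vanishing of the cross-grading terms $K_0(A)\otimes K_1(A)$ and $\Tor(K_0(A),K_1(A))$, which is the $t_1\neq t_2$ case of Lemma \ref{lem:BasicRestrictions}, after which the K\"unneth sequence collapses as you describe.
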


\begin{lemma}[{cf.~\cite[Lemma 6.4]{aifK}}]
\label{lem:NoCycSummand}
Let $A$ be a separable $\cstar$-algebra in the $\T{UCT}$ class, which has approximately inner flip and let $G_p$ be a direct summand of $K_0(A)$ or $K_1(A)$ which is a nonzero $p$-group for some prime $p$.
Then $G_p \cong \Q_{p^\infty}/\Z$.
\end{lemma}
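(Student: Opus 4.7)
The plan is to follow the same structure as \cite[Lemma 6.4]{aifK}, substituting Lemma \ref{lem:BasicRestrictions} for its older counterpart \cite[Lemma 6.2]{aifK}. The argument has three parts: establishing indecomposability of $G_p$, classifying the resulting options, and ruling out the finite cyclic possibilities.

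First I would show that $G_p$ is indecomposable as an abelian group. Suppose for contradiction that $G_p = H_1 \oplus H_2$ with both $H_1, H_2$ nonzero. Since $G_p$ is a direct summand of some $K_t(A)$ (say $K_t(A) = G_p \oplus L$), one has $K_t(A) = H_1 \oplus H_2 \oplus L$, exhibiting $H_1$ and $H_2$ as direct summands of $K_t(A)$ with $H_1 \cap H_2 = \{0\}$. Lemma \ref{lem:BasicRestrictions}(ii) then forces $\Tor(H_1, H_2) = 0$. On the other hand, each $H_i$ is a nonzero $p$-group, hence contains a nontrivial cyclic subgroup $\Z/p^{k_i}\Z$, and by the left-exactness of $\Tor$ in each variable, $\Tor(\Z/p^{k_1}\Z, \Z/p^{k_2}\Z) \cong \Z/p^{\min(k_1, k_2)}\Z$ embeds into $\Tor(H_1, H_2)$, giving a contradiction.

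By the classical structure theory of indecomposable torsion abelian groups, the indecomposable abelian $p$-groups are exactly the finite cyclic groups $\Z/p^k\Z$ for $k \geq 1$ and the Pr\"ufer group $\Q_{p^\infty}/\Z$. The remaining task is to exclude $G_p \cong \Z/p^k\Z$. For this I would use Lemma \ref{lem:TensorKComp} to identify $G_p \otimes G_p \cong \Z/p^k\Z$ as a direct summand of $K_0(A\otimes A)$ and $\Tor(G_p, G_p) \cong \Z/p^k\Z$ as a direct summand of $K_1(A\otimes A)$. Approximate innerness of the flip gives $K_*(\sigma_{A,A}) = \id$, and Lemma \ref{lem:KunnethFlipCorrected} translates this into explicit constraints on $\sigma_{G_p, G_p}$ and $\eta_{G_p, G_p}$ depending on the parity $t \in \{0,1\}$ of the degree in which $G_p$ sits. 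A direct computation of these maps, parallel to the $\Z/n\Z$ calculation in the example following Example \ref{ex:bott-element}, forces the sign relation $\id = -\id$ on $\Z/p^k\Z$, yielding a contradiction.

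The main obstacle is the final step: the sign bookkeeping in Lemma \ref{lem:KunnethFlipCorrected} depends on the parity $t$, and one must separately handle the cases of $G_p$ sitting in $K_0(A)$ versus $K_1(A)$, in each case combining the $\sigma$-constraint on $G_p \otimes G_p$ with the $\eta$-constraint on $\Tor(G_p, G_p)$ to rule out all finite orders, including the subtle case $(p,k) = (2,1)$.
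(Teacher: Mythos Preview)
Your first two steps are essentially the paper's approach (the indecomposability argument via $\Tor$ is a clean variant of what is done in \cite[Lemma 6.4]{aifK}, and the reduction to cocyclic $p$-groups is the same; just be sure to invoke countability of $K_*(A)$ when citing the classification of indecomposable $p$-groups).

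The third step, however, has a genuine gap precisely at the case you flag as ``subtle'', namely $(p,k)=(2,1)$. Your plan is to read off from Lemma~\ref{lem:KunnethFlipCorrected} and $K_*(\sigma_{A,A})=\id$ the constraints on $\sigma_{G_p,G_p}$ and $\eta_{G_p,G_p}$, and then force $\id=-\id$ on $\Z/p^k\Z$. Concretely: if $G_p\subseteq K_0(A)$ one obtains $\sigma_{G_p,G_p}=\id$ (no information) and $\eta_{G_p,G_p}=-\id$, while if $G_p\subseteq K_1(A)$ one obtains $\sigma_{G_p,G_p}=-\id$ and $\eta_{G_p,G_p}=\id$ (no information). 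Since $\sigma_{\Z/p^k\Z,\Z/p^k\Z}=\id$ and, by \cite[Proposition~5.1]{aifK}, $\eta_{\Z/p^k\Z,\Z/p^k\Z}=\id$, in either case the only nontrivial constraint is $\id=-\id$ on $\Z/p^k\Z$. For $p^k=2$ this is vacuous, and there is no further constraint to ``combine'': both the $\sigma$-equation and the $\eta$-equation are satisfied on $\Z/2\Z$. So your argument, as written, does not exclude $G_p\cong\Z/2\Z$.

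The paper (following \cite[Lemma~6.4]{aifK}) closes this gap differently, and the move is exactly the one you set up but do not use: having placed $G_p\otimes G_p\cong\Z/p^k\Z$ inside $K_0(A\otimes A)$ and $\Tor(G_p,G_p)\cong\Z/p^k\Z$ inside $K_1(A\otimes A)$ via Lemma~\ref{lem:TensorKComp}, one applies Lemma~\ref{lem:BasicRestrictions} to the algebra $A\otimes A$ (which again has approximately inner flip). The two copies of $\Z/p^k\Z$ now sit in \emph{different} degrees of $K_*(A\otimes A)$, so they intersect trivially, and Lemma~\ref{lem:BasicRestrictions}(ii) forces $\Tor(\Z/p^k\Z,\Z/p^k\Z)=0$, a contradiction for every $k\geq 1$ including $p^k=2$. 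Replacing your direct sign computation by this application of Lemma~\ref{lem:BasicRestrictions} to $A\otimes A$ fixes the argument.
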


\begin{proof}
The proof of \cite[Lemma 6.4 (i)]{aifK} can be used with Lemma \ref{lem:BasicRestrictions} in place of \cite[Lemma 6.2]{aifK}.
Let us explain why the strengthened hypotheses of Lemma \ref{lem:BasicRestrictions} still apply.

First, \cite[Lemma 6.2 (i)]{aifK} is invoked in the first paragraph of the proof of \cite[Lemma 6.4 (i)]{aifK}, using two direct summands of $G_p$ as $G_1$ and $G_2$.
Since $G_p$ is assumed to be a direct summand of either $K_0(A)$ or $K_1(A)$, it follows that these two direct summands of $G_p$ have the same property, so Lemma \ref{lem:BasicRestrictions} can be used here.

Second, \cite[Lemma 6.2 (ii)]{aifK} is invoked in the third paragraph of the proof of \cite[Lemma 6.4 (i)]{aifK}, using $A\otimes A$ in place of $A$ and using two copies of $\Z/n\Z$ as $G_1$ and $G_2$; one copy sits inside $K_0(A\otimes A)$ and the other inside $K_1(A\otimes A)$.
Thus, the required hypothesis of Lemma \ref{lem:BasicRestrictions} does apply to these $G_1$ and $G_2$, as required.
\end{proof}

\begin{proof}[Proof of Theorem \ref{thm:NecessaryCorrected}]
Set $G_i \coloneqq K_i(A)$ for $i=0,1$, and $G \coloneqq K_*(A)=G_0\oplus G_1$.
Let $T_{G_i}$ denote the torsion subgroup of $G_i$ for $i=0,1$, so $T_G \coloneqq T_{G_0}\oplus T_{G_1}$ is the torsion subgroup of $G$.

We follow the following three steps (similar to the proof of \cite[Theorem 6.1]{aifK}):
\begin{enumerate}
\item $G/T_G$ has rank at most one;
\item $T_{G_i} \cong \Q_{m_i}/\Z$, for some supernatural number $m_i$ of infinite type; and
\item the theorem.
\end{enumerate}

For step (i), if $G/T_G$ has rank greater than one, then $K_*(A\otimes \mathcal Q) \cong G\otimes \Q$ and so there would be a direct summand (a subspace) $G_i$ of $K_0(A\otimes \mathcal Q)$ or $K_1(A\otimes \mathcal Q)$ for $i=1,2$, such that $G_1 \cap G_2 = \{0\}$.
This would contradict Lemma \ref{lem:BasicRestrictions} (i).

For step (ii), fix $i\in\{0,1\}$.
We may write $T_{G_i}$ as a direct sum of $p$-components $T_p$, over all primes $p$.
Fix a prime $p$; it will suffice to show that $T_p$ is either $0$ or $\Q_{p^\infty}/\Z$.

We now consider the algebra $A \otimes \mathcal F_{1,p^\infty}$ (where we recall that $K_*(\mathcal F_{1,p^\infty}) \cong 0 \oplus \Q_{p^\infty}/\Z$).
Since $K_0(\mathcal F_{1,p^\infty})=0$, the K\"unneth formula gives the following exact sequence:
\begin{equation}
\label{eq:Necessary1}
K_{1-i}(A)\otimes K_1(\mathcal F_{1,p^\infty}) \to K_i(A\otimes \mathcal F_{1,p^\infty}) \to \Tor(K_i(A),K_1(\mathcal F_{1,p^\infty}))\cong T_p,
\end{equation}
using \cite[62.J]{Fuchs}.
In particular, we see that $K_i(A\otimes \mathcal F_{1,p^\infty})$ is an extension of $p$-groups, so it is itself a $p$-group, and therefore by Lemma \ref{lem:NoCycSummand}, $K_i(A\otimes \mathcal F_{1,p^\infty})$ is either $0$ or $\Q_{p^\infty}/\Z$.
By \eqref{eq:Necessary1}, it follows that $T_p$ is a quotient of either $0$ or $\Q_{p^\infty}/\Z$, and therefore it is isomorphic to either $0$ or $\Q_{p^\infty}/\Z$.

For step (iii), we first note that since $T_{G_i}$ is divisible and $G_i/T_{G_i}$ is a subgroup of $\Q$, it follows using \cite[Lemma 1.1]{aifK} that
\[ G_i \cong T_{G_i} \oplus G_i/T_{G_i}. \]

If $G_1/T_{G_1} \neq 0$ then since this group is torsion-free, we have that $(G_1/T_{G_1}) \otimes (G_1/T_{G_1})$ is a nonzero direct summand of $K_0(A\otimes A)$ (by Lemma \ref{lem:TensorKComp}).
However by \eqref{eq:KunnethFlipDiag1}, $K_0(\sigma_{A,A}) = -\sigma_{(G_1/T_{G_1}),(G_1/T_{G_1})}$ on this direct summand, which is different from the identity map.
This contradicts that $A$ has approximately inner flip.
Therefore, $G_1/T_{G_1} = 0$, i.e., $G_1=T_{G_1}$.

Likewise, if $T_{G_0} \neq 0$ then $\Tor(T_{G_0},T_{G_0}) \cong \Q_{m_0}/\Z$ would be a nonzero direct summand of $K_1(A\otimes A)$, and by \eqref{eq:KunnethFlipDiag2}, $K_1(\sigma_{A,A}) = -\eta_{T_{G_0},T_{G_0}}$ on this direct summand.
By \cite[Proposition 5.1]{aifK}, this differs from the identity map, so this contradicts approximately inner flip.
We conclude that $T_{G_0}=0$, i.e., $G_0$ is torsion-free and hence a subgroup of $\Q$.

Relabelling, we can summarize by saying that $G_0$ is either $0$ or $G_0=\Q_n$ for some supernatural number $n$, and $G_1=\Q_m/\Z$ for some supernatural number $m$ of infinite type.
Finally, by Lemma \ref{lem:BasicRestrictions}, we have $G_0\otimes G_1=0$, which implies that $m$ must divide $n$.
\end{proof}

\begin{proof}[Proof of Theorem \ref{thm:MainThmCorrected}]
(i) $\Rightarrow$ (iii) is a combination of Theorem \ref{thm:NecessaryCorrected} and \cite[Propositions 2.7, 2.8, 2.10]{EffrosRosenberg}.
(iii) $\Rightarrow$ (iv) uses classification of $\cstar$-algebras (see \cite[Remark 2.1]{aifK}).
(iv) $\Rightarrow$ (ii) is Corollary \ref{cor:Suff}, and (ii) $\Rightarrow$ (i) is immediate.
\end{proof}

Finally, we explain why \cite[Lemmas 6.2 and 6.4]{aifK} are correct as stated (although their proofs are not).
We can easily see from the form of $K_*(A)$ given by Theorem \ref{thm:NecessaryCorrected} that \cite[Lemma 6.2]{aifK} holds.

For \cite[Lemma 6.4]{aifK}, part (i) is an obvious consequence of Theorem \ref{thm:NecessaryCorrected}.
For (ii), let us assume that $K_*(A) = G_p$ and that $A\otimes A$ has approximately inner flip.
Then by the K\"unneth formula, we find that $K_*(A\otimes A)$ is a nonzero $p$-group, from which it follows by Theorem \ref{thm:NecessaryCorrected} that $K_0(A\otimes A)=0$ and $K_1(A\otimes A)\cong \Q_{p^\infty}$.
Consequently, $K_0(A)$ and $K_1(A)$ must be $p$-divisible (or else $K_i(A)\otimes K_i(A)\neq 0$ which would imply $K_0(A\otimes A)\neq 0$ by the K\"unneth formula).
Using this and the K\"unneth formula again, we see that $\Q_{p^\infty} \cong K_1(A\otimes A)\cong \Tor(K_0(A),K_0(A))\oplus \Tor(K_1(A),K_1(A))$.
As $\Q_{p^\infty}$ is directly indecomposable, $\Tor(K_i(A),K_i(A))\cong 0$, which implies that $K_i(A)=0$ (by the Cartan--Eilenberg exact sequence for Tor).
By \cite[Corollary 27.4]{Fuchs}, it follows that $G_p=K_{1-i}(A)$ is $\Q_{p^\infty}/\Z$.

\end{document}